\newtheorem{Definition}{Definition}[section]
\newtheorem{Theorem}[Definition]{Theorem}
\newtheorem{Lemma}[Definition]{Lemma}
\newtheorem{Proposition}[Definition]{Proposition}
\newtheorem{Corollary}{Corollary}[section]
\newtheorem{Remark}[Definition]{Remark}
\newcommand{\Z}{\mathbb Z}%
\begin{document}
\baselineskip16pt

\author[Sumit Kumar Rano]{Sumit Kumar Rano}

\address{Sumit Kumar Rano \endgraf Stat-Math Unit,	\endgraf Indian Statistical Institute,	\endgraf 203 B. T. Road, Kolkata 700108, India.} \email{sumitrano1992@gmail.com}

\title[Eigenfunctions of the Laplacian satisfying Hardy-type estimates on hom. trees]
{Eigenfunctions of the Laplacian satisfying Hardy-type estimates on homogeneous trees}
\subjclass[2010]{Primary 43A85, 05C05 Secondary 39A12, 20E08}
\keywords{Homogeneous Tree, Laplacian, Eigenfunction, Hardy-type space, Poisson Transform}

\begin{abstract}
This work deals with the characterization of eigenfunctions of the Laplacian $\mathcal{L}$ on a homogeneous tree $\mathcal{X}$, which satisfy certain growth conditions. More precisely, we shall prove that the Poisson transform on $\mathcal{X}$ provides an one-to-one correspondence between the subspace of all Hardy-type eigenfunctions of $\mathcal{L}$ on $\mathcal{X}$ and the Lebesgue spaces (possibly the set of all complex measures) on the boundary of $\mathcal{X}$.
\end{abstract}

\maketitle

\section{Introduction}
The study of eigenfunctions of the Laplacian on various spaces and its representation as the Poisson integral of functions defined on their respective boundaries have always been a central theme of investigation in the field of harmonic analysis and partial differential equation. When the underlying space is the open unit disc $\mathbb{D}$ with the boundary $\partial\mathbb{D}$, such study is mostly concentrated around harmonic functions. An important prototypical result in this direction says that the Poisson integral
$$Pf(se^{i\theta})=\frac{1}{2\pi}~\int\limits_{-\pi}^{\pi}p_{s}(\theta-\psi)f(\psi)d\psi,\quad\text{where }0\leq s<1\text{ and }-\pi\leq\theta\leq\pi,$$
provides an one-to-one correspondence between the Lebesgue spaces $L^{r}(\partial\mathbb{D})$ (respectively the set of all complex measures on $\partial \mathbb{D}$ when $r=1$) and the subspace of all harmonic functions $u$ on $\mathbb{D}$ which satisfies the Hardy-type growth conditions (see \cite{R2})
$$\|u\|_{H_{r}(\mathbb{D})}:=
\begin{cases}
\sup\limits_{0\leq s<1}\left(~\frac{1}{2\pi}~\int\limits_{-\pi}^{\pi}|u(se^{i\theta})|^{r}d\theta~\right)^{1/r}<\infty,&\quad\text{when }1\leq r<\infty,\\
\sup\limits_{z\in\mathbb{D}} |u(z)|<\infty,&\quad\text{when } r=\infty.
\end{cases}
$$

In the context of a homogeneous tree $\mathcal{X}$, the Poisson boundary representation of harmonic functions with respect to the Laplacian $\mathcal{L}$ was given by Cartier \cite{C}. Among other things, the author proved that $\mathcal{L}u=0$ if and only if $u$ is the Poisson integral of a martingale defined on the boundary $\Omega$ of $\mathcal{X}$ (see \cite[Proposition A.4]{C}). Thereafter, Cohen et al. \cite{CCS} introduced the notion of {\em harmonic Hardy spaces} on $\mathcal{X}$ and a characterization similar to that on $\mathbb{D}$ was obtained on $\mathcal{X}$. We also refer to the articles \cite{BP,KPT,T} to find more about harmonic Hardy spaces on $\mathcal{X}$.

On the other hand, soon after the discovery by Cartier, this study went far beyond the realms of only harmonic functions of $\mathcal{L}$ on $\mathcal{X}$. This is due to a highly non-Euclidean fact that there is an abundance of eigenfunctions of $\mathcal{L}$ on $\mathcal{X}$ with complex eigenvalues which are well behaved in terms of certain size conditions. For $z\in\mathbb{C}$, the Poisson transform $\mathcal{P}_{z}F$ of a nice function $F$ on $\Omega$ is defined by (see \cite[Chapter 4, Formula (1) at Page 53]{FTP2})
$$\mathcal{P}_{z}F(x)=\int\limits_{\Omega}p^{1/2+iz}(x,\omega)F(\omega)d\nu(\omega),\quad\text{for all }x\in\mathcal{X},$$
where $p(x,\omega)$ denotes the Poisson kernel on $\mathcal{X}$. Fig\`{a}-Talamanca et al. \cite{FTP1} proved that $\mathcal{L}(\mathcal{P}_{z}F)=\gamma(z)\mathcal{P}_{z}F$, where $\gamma$ is an entire function as in (\ref{gammaz}) below. However, not all eigenfunctions of $\mathcal{L}$ with eigenvalue $\gamma(z)$ are of the above form.

\begin{Theorem}[{{\cite[Theorem A]{MZ}}}]\label{MZ2}
Let $z\in\mathbb{C}$ and $\tau=2\pi/\log q$. If $z\neq (k\tau+i)/2$, where $k\in\mathbb{Z}$ then the Poisson transform $\mathcal{P}_{z}$ is a bijective map between the space of all $\gamma(z)$-eigenfunctions of $\mathcal{L}$ on $\mathcal{X}$ and the space of all martingales defined on the boundary $\Omega$.
\end{Theorem}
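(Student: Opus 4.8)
My plan is to identify a martingale on $\Omega$ with a finitely additive complex measure $\mu$ on the Boolean algebra $\mathcal{A}$ generated by the cylinders $\Omega_x$, $x\in\mathcal{X}$, and to reduce everything to one combinatorial identity. Fix a reference vertex $o\in\mathcal{X}$; for $x$ at distance $n$ from $o$ let $o=x_0,x_1,\dots,x_n=x$ be the geodesic, so $\Omega=\Omega_{x_0}\supset\Omega_{x_1}\supset\cdots\supset\Omega_{x_n}$, and let $\Omega_{x_j}\setminus\Omega_{x_{j+1}}$ be the set of boundary points that first leave this geodesic at $x_j$. Since $p(x,\omega)=q^{\langle x,\omega\rangle_o}$ equals $q^{2j-n}$ on $\Omega_{x_j}\setminus\Omega_{x_{j+1}}$ for $0\le j\le n-1$ and $q^{n}$ on $\Omega_{x_n}$, the Poisson transform of a martingale is a \emph{finite} sum at every vertex, and writing $s=1/2+iz$ and $a_j=\mu(\Omega_{x_j})$ one obtains, for $n\ge 1$, the key identity
\begin{equation}\label{triang}
\mathcal{P}_z\mu(x_n)=q^{-ns}\Bigl(a_0+\sum_{j=1}^{n-1}\bigl(q^{2js}-q^{2(j-1)s}\bigr)a_j\Bigr)+q^{(n-2)s}\bigl(q^{2s}-1\bigr)\,a_n,
\end{equation}
together with $\mathcal{P}_z\mu(o)=a_0$. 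That $\mathcal{P}_z$ carries martingales into $\gamma(z)$-eigenfunctions of $\mathcal{L}$ is the theorem of Fig\`{a}-Talamanca et al.\ recalled above, so what remains is bijectivity.

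For \textbf{injectivity} I would note that the coefficient $q^{(n-2)s}(q^{2s}-1)$ of $a_n$ in \eqref{triang} is nonzero exactly when $q^{2s}\neq1$, i.e.\ when $2s\log q\notin 2\pi i\mathbb{Z}$, i.e.\ (using $\tau=2\pi/\log q$) when $z\neq(k\tau+i)/2$ for every $k\in\mathbb{Z}$. Under this hypothesis \eqref{triang} is upper triangular with invertible diagonal along each geodesic ray, so the values $a_0,a_1,a_2,\dots$ — hence, as the cylinders generate $\mathcal{A}$, the measure $\mu$ itself — are recovered one after another from the values of $\mathcal{P}_z\mu$ on that ray; in particular $\mathcal{P}_z\mu\equiv0$ forces $\mu=0$. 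No eigenvalue equation is needed for this half.

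For \textbf{surjectivity}, given $u$ with $\mathcal{L}u=\gamma(z)u$ I would \emph{define} $\mu$ on cylinders by $\mu(\Omega)=u(o)$ and then, recursively along geodesics, by taking $\mu(\Omega_{x_n})$ to be the unique solution of \eqref{triang} with $\mathcal{P}_z\mu(x_n)$ replaced by $u(x_n)$ — legitimate again because $q^{2s}\neq1$, and unambiguous because the geodesic from $o$ to $x_n$ is unique. The crux, and the step I expect to be the main obstacle, is to check that this cylinder function is additive under refinement, $\mu(\Omega_x)=\sum_{y}\mu(\Omega_y)$ with $y$ ranging over the neighbours of $x$ farther from $o$; granting this, $\mu$ extends uniquely to a finitely additive measure on $\mathcal{A}$ (any clopen set being a finite disjoint union of cylinders subdivided to a common level), i.e.\ to a martingale, and by the very construction of $\mu$ one reads off $\mathcal{P}_z\mu=u$. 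Concretely I would first prove by induction on $n$ the auxiliary identity $a_0+\sum_{j=1}^{n}\bigl(q^{2js}-q^{2(j-1)s}\bigr)\mu(\Omega_{x_j})=q^{ns}u(x_n)$, which collapses \eqref{triang} at level $n+1$ to $\mu(\Omega_y)=\bigl(u(y)-q^{-s}u(x)\bigr)\big/\bigl(q^{(n-1)s}(q^{2s}-1)\bigr)$ for every child $y$ of $x=x_n$; summing over the $q$ children and using the eigenvalue equation in the form $\sum_{w\sim x}u(w)=(q^{1/2+iz}+q^{1/2-iz})u(x)$ (so that the sum over the children equals this minus $u(x_{n-1})$) should telescope the right-hand side down to exactly $\mu(\Omega_{x_n})$. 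The vertex $o$, which has $q+1$ children and no predecessor, is handled separately by the same computation.

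Everything else is routine: the locally constant nature of $\omega\mapsto p(x,\omega)$ removes any convergence worry; passing from an additive cylinder function to a finitely additive measure on $\mathcal{A}$ is standard tree combinatorics; and the identification of finitely additive measures on $\mathcal{A}$ with boundary martingales is standard.
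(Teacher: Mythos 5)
The paper does not actually prove this statement: it is quoted verbatim as Theorem A of Mantero--Zappa \cite{MZ}, so there is no internal proof to compare against. Your argument is correct and is, in essence, a reconstruction of the Mantero--Zappa approach: express $\mathcal{P}_z$ of a martingale (viewed as a finitely additive measure on cylinders) as the triangular system along each geodesic, invert it recursively, and use the eigenvalue equation to verify additivity of the resulting cylinder function. I checked the decisive points: your coefficient $q^{(n-2)s}(q^{2s}-1)$ with $s=1/2+iz$ vanishes precisely when $q^{2s}=1$, i.e.\ exactly at the excluded parameters $z=(k\tau+i)/2$; the recursion gives $\mu(\Omega_y)=\bigl(u(y)-q^{-s}u(x_n)\bigr)/\bigl(q^{(n-1)s}(q^{2s}-1)\bigr)$ for each child $y$ of $x_n$, and summing over the $q$ children and using $\sum_{w\sim x_n}u(w)=(q^{s}+q^{1-s})u(x_n)$ yields numerator $q^{s}u(x_n)-u(x_{n-1})$, which indeed equals $q^{s}$ times the numerator defining $\mu(\Omega_{x_n})$, so additivity holds; and at the root the sum over all $q+1$ neighbours produces $u(o)(q^{s}-q^{-s})/(q^{s}-q^{-s})=u(o)$, as required. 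Together with injectivity from the triangular structure and the known fact that $\mathcal{P}_z$ of a martingale is a $\gamma(z)$-eigenfunction, this gives the bijection, so your proposal stands as a complete and correct proof of the cited result.
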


Our aim in this paper is to look for a refinement of Theorem \ref{MZ2} by imposing the Hardy-type size conditions on the eigenfunctions of $\mathcal{L}$ on $\mathcal{X}$. These non-Euclidean size estimates arise naturally due to the behaviour of the Poisson transforms $\mathcal{P}_{z}$ which are matrix coefficients of certain representations $\pi_{z}$ of the group of automorphisms $G$ of $\mathcal{X}$, acting on function spaces over $\Omega$. We refer to \cite{CMS,FTP1,FTP2,KP,KR,MZ} for details. Appearance of these estimates is natural since they are intimately linked to the Kunze-Stein phenomenon \cite{CMS,V}, a key non-Euclidean aspect of homogeneous trees.

Before stating our main results, we need to introduce a few more notations. For details, we refer to Section \ref{Section 2}. The number $2\pi/\log q$ is denoted by $\tau$. For any $p\in(1,\infty)$, let $p^{\prime}$ denote the conjugate index $p/(p-1)$ and $\delta_{p}=-\delta_{p^{\prime}}=1/p-1/2$. We assume that $p^{\prime}=\infty$ when $p=1$ and vice-versa. Thus $\delta_{1}=-\delta_{\infty}=1/2$. For $1\leq p\leq 2$ and $1\leq r\leq \infty$, $\mathcal{H}^{r}_{p}(\mathcal{X})$ denotes the Hardy-type spaces on $\mathcal{X}$ which is defined as the collection of all complex-valued functions $u$ on $\mathcal{X}$ that satisfies $\|u\|_{\mathcal{H}^{r}_{p}(\mathcal{X})}<\infty$ (see (\ref{hardy1}) and (\ref{hardy2}) below for the definition).

\begin{Theorem}\label{hardypcase}
Let $1\leq p\leq 2$ and $u$ be a complex-valued function on $\mathcal{X}$. Suppose that $z=\alpha+i\delta_{p^{\prime}}$, where $\alpha$ satisfies either of the following conditions:
\begin{enumerate}
\item[(i)] If $1\leq p<2$, $\alpha\in\mathbb{R}$.
\item[(ii)] If $p=2$, $\alpha\in(\tau/2)\mathbb{Z}$.
\end{enumerate}
Then $u=\mathcal{P}_{z}F$ for some $F\in L^{r}(\Omega)$ if and only if $u\in \mathcal{H}^{r}_{p}(\mathcal{X})$ for $1<r\leq\infty$ and $\mathcal{L}u=\gamma(z)u$. When $r=1$, the above result holds true with $u=\mathcal{P}_{z}\mu$ for some complex measure $\mu$ on the boundary $\Omega$. Moreover, there exists a positive constant $C_{p}$ depending only on $p$ such that
$$C_{p}\|F\|_{L^{r}(\Omega)}\leq\|\mathcal{P}_{z}F\|_{\mathcal{H}^{r}_{p}(\mathcal{X})}\leq\|F\|_{L^{r}(\Omega)},\quad\text{for all }F\in L^{r}(\Omega),\text{ for all }1\leq r\leq \infty.$$
\end{Theorem}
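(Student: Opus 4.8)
The plan is to reduce Theorem \ref{hardypcase} to the martingale/Poisson-transform correspondence of Theorem \ref{MZ2} together with sharp norm estimates for the Poisson kernel. First I would record the basic integral formula for the Hardy norm of $\mathcal{P}_z F$. Writing $z=\alpha+i\delta_{p'}$, the factor $p^{1/2+iz}(x,\omega)$ has modulus $p^{1/2-\delta_{p'}}(x,\omega)=p^{1/p}(x,\omega)$ along the sphere of a given radius; the point of the shift in the spectral parameter is precisely that on each sphere $|x|=n$ the normalized counting measure pushes the Poisson kernel to a probability-like density whose $L^{p'}$-norm in $\omega$ against $L^p$-norm controls things uniformly in $n$. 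So the upper bound $\|\mathcal{P}_z F\|_{\mathcal{H}^r_p(\mathcal{X})}\le \|F\|_{L^r(\Omega)}$ should follow from Minkowski's integral inequality (for $r\ge 1$) after checking that, for each $x$, $\int_\Omega p^{1/p}(x,\omega)\,d\nu(\omega)$ — equivalently the relevant weighted sphere average — is $\le 1$, which is a direct computation from the explicit form of $p(x,\omega)$ and the definition of the $\mathcal{H}^r_p$ norm in (\ref{hardy1})--(\ref{hardy2}). The case $r=\infty$ is the pointwise estimate $|\mathcal{P}_z F(x)|\le \|F\|_\infty \int_\Omega p^{1/p}(x,\omega)\,d\nu(\omega)$, and $r=1$ with a measure $\mu$ is the same with $|F|\,d\nu$ replaced by $|\mu|$.

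Next I would prove the lower bound $C_p\|F\|_{L^r(\Omega)}\le \|\mathcal{P}_z F\|_{\mathcal{H}^r_p(\mathcal{X})}$ and, simultaneously, surjectivity. The natural device is boundary convergence: for $\omega\in\Omega$ and $x_n\to\omega$ along the geodesic ray, the measures $p^{1/p}(x_n,\cdot)\,d\nu$ should converge weak-$*$ to (a constant multiple of) the Dirac mass at $\omega$, so that $\mathcal{P}_z F(x_n)$ recovers $F(\omega)$ after normalization — this is the tree analogue of Fatou's theorem and is where the harmonic-analysis input on $\mathcal{X}$ enters. Concretely I would fix a reference origin, restrict $u=\mathcal{P}_zF$ to the sphere $|x|=n$, and show that the sphere averages $\big(\int_{|x|=n}|u(x)|^r\big)^{1/r}$ (suitably weighted as in the definition of $\|\cdot\|_{\mathcal{H}^r_p(\mathcal{X})}$) are comparable to $\|F_n\|_{L^r(\Omega)}$, where $F_n$ is the martingale approximation of $F$ — the conditional expectation onto the $\sigma$-algebra generated by the sphere of radius $n$. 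By the martingale convergence theorem $F_n\to F$ in $L^r(\Omega)$ for $1<r\le\infty$ (and weak-$*$ for $r=1$), giving both the lower bound with an explicit $C_p$ and, running the argument in reverse, the fact that any $u\in\mathcal{H}^r_p(\mathcal{X})$ with $\mathcal{L}u=\gamma(z)u$ arises as $\mathcal{P}_zF$: by Theorem \ref{MZ2} such a $u$ is already $\mathcal{P}_z$ of some martingale, and the uniform $\mathcal{H}^r_p$ bound forces that martingale to be bounded in $L^r$, hence to be (the martingale of) an $L^r$ function or a measure when $r=1$.

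The hypotheses on $\alpha$ enter exactly to keep $z$ away from the exceptional set $(k\tau+i)/2$ of Theorem \ref{MZ2}: when $1\le p<2$ one has $\delta_{p'}\ne 1/2$ so $z=\alpha+i\delta_{p'}$ is automatically off that set for every real $\alpha$, while for $p=2$ one has $\delta_{p'}=0$ and the constraint $\alpha\in(\tau/2)\mathbb{Z}$ is imposed for other structural reasons (the relevant spherical function / $c$-function behaviour), but in any case $z=\alpha\notin (k\tau+i)/2$. I would state this compatibility as a short lemma at the outset so that Theorem \ref{MZ2} applies verbatim. The main obstacle I anticipate is the sharpness of the constant $C_p$ and the uniformity in $n$ of the two-sided sphere-average estimate: this requires precise asymptotics of the Poisson kernel and of the $c$-function along spheres (the Kunze--Stein-type bookkeeping alluded to in the introduction), and getting a bound that depends only on $p$ — not on $r$ or on $\alpha$ — is the delicate point. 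Everything else is either a direct consequence of Theorem \ref{MZ2} or a routine application of Minkowski's inequality and martingale convergence.
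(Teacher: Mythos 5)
Your overall skeleton coincides with the paper's: the upper bound is indeed Minkowski's inequality plus the identity $\int_{\Omega}p^{1/p}(x,\omega)\,d\nu(\omega)=\phi_{i\delta_{p}}(|x|)$ (which is exactly the weight in (\ref{hardy1})--(\ref{hardy2})), the lower bound rests on the Fatou-type radial convergence of $\phi_{z}(n)^{-1}\mathcal{P}_{z}F(k\cdot\omega_{n})$ to $F$ (Proposition \ref{radialconvergence}), and the converse goes through Theorem \ref{MZ2} plus Proposition \ref{martingalelpfunction}, with your observation about $z$ avoiding the exceptional set $(k\tau+i)/2$ being correct. The genuine gap is in the one step you left as an assertion: that the weighted sphere average of $u=\mathcal{P}_{z}F$ at radius $n$ is \emph{comparable} to $\|F_{n}\|_{L^{r}(\Omega)}$ uniformly in $n$, which is what your converse argument needs, since $u$ restricted to $S(o,n)$ only sees $F_{n}=\mathcal{E}_{n}(F)$. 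This same-radius comparability is false (with uniform constants) precisely in the case $p=2$: take $z=0$ and $F=\Delta_{n}(F)\in\mathcal{K}_{n}(\Omega)$; a direct computation with the kernel $p^{1/2}(x,\omega)=q^{|c(x,\omega)|-n/2}$ gives $\mathcal{P}_{0}F(x)=\frac{q-1}{q+1}q^{-n/2}F(\omega_{x})$ for $|x|=n$, so after dividing by $\phi_{0}(n)\asymp(1+n)q^{-n/2}$ the sphere-$n$ quantity is $\asymp\frac{1}{n+1}\|F\|_{L^{r}(\Omega)}$, while $\|F_{n}\|_{L^{r}(\Omega)}=\|F\|_{L^{r}(\Omega)}$. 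The information about such high-frequency pieces is only recovered at radii $l\gg n$, which is why the Hardy norm is a supremum over all radii and why the lower bound must be run through the limit $l\to\infty$ rather than at the single radius $n$. Even for $1\leq p<2$ and at the endpoints $r=1,\infty$, the lower half of your comparability claim needs an argument (the relevant martingale-transform coefficients must be inverted uniformly), which the proposal does not supply.

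The paper's mechanism that replaces this step, and which is absent from your proposal, is the family of averaging operators $\varepsilon_{n}$ of (\ref{epsilonn}): one uses the identity $\varepsilon_{n}u=\mathcal{P}_{z}F_{n}$ (from Mantero--Zappa), proves that each $\varepsilon_{n}$ is a contraction on $\mathcal{H}^{r}_{p}(\mathcal{X})$ (Lemma \ref{epsilonnboundedness}, via a projection/duality argument on each sphere $S(o,m)$), and then applies the two-sided estimate of Proposition \ref{hardyifpart} to the cylindrical function $F_{n}$ itself, so that $\|F_{n}\|_{L^{r}(\Omega)}\leq C_{p}\|\mathcal{P}_{z}F_{n}\|_{\mathcal{H}^{r}_{p}(\mathcal{X})}=C_{p}\|\varepsilon_{n}u\|_{\mathcal{H}^{r}_{p}(\mathcal{X})}\leq C_{p}\|u\|_{\mathcal{H}^{r}_{p}(\mathcal{X})}$; this uses the full supremum over radii and hence survives the $p=2$ degeneration above. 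You would either need to prove an analogue of this contractivity lemma, or repair the comparability claim by quantifying how the mass of each martingale difference $\Delta_{j}(F)$ reappears in the sphere averages only as $l\to\infty$ (and, for $r=\infty$, replace norm convergence by the weak$^{\ast}$ duality argument used in Proposition \ref{hardyifpart}). Two smaller points: the boundary convergence you invoke must be for the complex, oscillating kernel $p^{1/2+iz}$ normalized by $\phi_{z}(n)$, not merely the weak$^{\ast}$ convergence of the positive measures $p^{1/p}(x_{n},\cdot)\,d\nu$ to a Dirac mass, so the precise input of Proposition \ref{radialconvergence} cannot be bypassed; and the uniformity of $C_{p}$ in $\alpha$ comes from periodicity of $\mathbf{c}$ together with its absence of zeros and poles in $\Im z<0$, as reflected in (\ref{phipointwise1})--(\ref{phipointwise2}).
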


In particular, when $p=1$ and $\alpha\in\tau\mathbb{Z}$ in Theorem \ref{hardypcase}, we get back the characterization results \cite[Theorems 2.3 and 2.4]{CCS} proved for harmonic functions of $\mathcal{L}$ on $\mathcal{X}$ (see also \cite[Proposition 10]{KPT}). However, Theorem \ref{hardypcase} does not include those real numbers $\gamma(z)$ that lie in the interior of the $L^{2}$-spectrum of $\mathcal{L}$, which essentially corresponds to all $z\in\mathbb{R}\setminus(\tau/2)\mathbb{Z}$. The reason is deeply rooted in the meromorphic nature of the Harish-Chandra’s c-function (see (\ref{harishchandra})) which has poles at the points $z\in(\tau/2)\mathbb{Z}$. In this regard, we have proved the following result. For the definition of $\mathcal{H}^{r}_{\ast}(\mathcal{X})$ and $\|\cdot\|_{\mathcal{H}^{r}_{\ast}(\mathcal{X})}$, we refer to the expressions (\ref{2hardy1}) and (\ref{2hardy2}).

\begin{Theorem}\label{hardy2case}
Let $z\in\mathbb{R}\setminus(\tau/2)\mathbb{Z}$. Then $u=\mathcal{P}_{z}F$ for some $F\in L^{r}(\Omega)$ if and only if $u\in \mathcal{H}^{r}_{\ast}(\mathcal{X})$ for $1<r<\infty$ and $\mathcal{L}u=\gamma(z)u$.  Moreover, for every $1<r<\infty$, there exist positive constants $C_{1,r}$ and $C_{2,r}$ independent of $z$ such that
$$C_{1,r}|\mathbf{c}(z)|\|F\|_{L^{r}(\Omega)}\leq\|\mathcal{P}_{z}F\|_{\mathcal{H}^{r}_{\ast}(\mathcal{X})}\leq C_{2,r}|\mathbf{c}(z)|\|F\|_{L^{r}(\Omega)},\quad\text{for all }F\in L^{r}(\Omega),$$
where $\mathbf{c}(\cdot)$ is the Harish-Chandra's c-function.
\end{Theorem}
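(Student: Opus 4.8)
The plan is to treat this as a boundary-value problem for the fixed eigenvalue $\gamma(z)$ with $z$ real, $z \notin (\tau/2)\mathbb{Z}$, and to exploit the fact that the obstruction appearing in Theorem \ref{hardypcase} is entirely explained by the poles of the Harish-Chandra $\mathbf{c}$-function at $(\tau/2)\mathbb{Z}$. The natural way to absorb that pole is to renormalize: instead of working with the Poisson transform $\mathcal{P}_z$ directly, work with the ``normalized'' transform, morally $\mathbf{c}(z)^{-1}\mathcal{P}_z$, for which the spherical function $\phi_z = \mathcal{P}_z \mathbf{1}$ has the good decay $|\phi_z(x)| \asymp (1+|x|)q^{-|x|/2}$ uniformly in $z$ on the critical line $\mathrm{Im}\, z = 0$. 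The space $\mathcal{H}^r_\ast(\mathcal{X})$ (defined in (\ref{2hardy1})--(\ref{2hardy2})) should be precisely the Hardy-type space tuned to this normalized size, so that the estimate $C_{1,r}|\mathbf{c}(z)|\|F\|_{L^r} \leq \|\mathcal{P}_z F\|_{\mathcal{H}^r_\ast} \leq C_{2,r}|\mathbf{c}(z)|\|F\|_{L^r}$ becomes the statement that $\mathbf{c}(z)^{-1}\mathcal{P}_z$ is bounded above and below as a map $L^r(\Omega) \to \mathcal{H}^r_\ast(\mathcal{X})$, with constants independent of $z$.

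First I would establish the upper bound. Writing $\mathcal{P}_z F(x) = \int_\Omega p^{1/2+iz}(x,\omega) F(\omega)\, d\nu(\omega)$, one restricts to a sphere $\{|x| = n\}$ and applies Minkowski's integral inequality together with Hölder in $\omega$; the key input is a sharp pointwise/integral estimate for the Poisson kernel raised to the power $1/2+iz$ with $z$ real, which on homogeneous trees is explicit and should yield, after summing the geometric contributions of the $q^{n-1}(q+1)$ vertices at distance $n$ from the root, a bound of the form $(\text{sphere-averaged } \mathcal{H}^r_\ast\text{ norm at level } n) \leq C_{2,r}|\mathbf{c}(z)| \|F\|_{L^r(\Omega)}$, uniformly in $n$ and in $z$. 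The appearance of $|\mathbf{c}(z)|$ here is exactly the normalization constant that arises when one passes from the unnormalized kernel to a quantity with the uniform size of $\phi_z$; this is the place where the real-analyticity of $\mathbf{c}$ away from $(\tau/2)\mathbb{Z}$ and the asymptotics $|\mathbf{c}(z)| \asymp$ (something like $|z|^{-1}$ near the poles, bounded away from them) must be invoked to keep constants $z$-independent.

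Next I would prove the lower bound and surjectivity simultaneously, which is where the real work lies. For $1 < r < \infty$ one wants: given $u \in \mathcal{H}^r_\ast(\mathcal{X})$ with $\mathcal{L}u = \gamma(z)u$, to recover a boundary function $F \in L^r(\Omega)$ with $u = \mathcal{P}_z F$. By Theorem \ref{MZ2} (valid since $z \in \mathbb{R} \setminus (\tau/2)\mathbb{Z}$ certainly avoids the exceptional set $\{(k\tau+i)/2\}$), we already know $u = \mathcal{P}_z F$ for some martingale $F$ on $\Omega$; the task is to upgrade $F$ to an $L^r$ function and to control its norm from below by $|\mathbf{c}(z)|^{-1}\|u\|_{\mathcal{H}^r_\ast}$. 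The strategy is to realize $F$ as a weak-$\ast$ (for $r = \infty$, but here $1 < r < \infty$ so it is genuine weak) limit of the restrictions $u|_{\{|x|=n\}}$, suitably transported to $\Omega$ via the natural projections, after dividing by the spherical function to correct the radial decay; the uniform bound $\sup_n \|u|_{\{|x|=n\}}\|_{(\text{normalized } L^r)} \leq |\mathbf{c}(z)|^{-1}\|u\|_{\mathcal{H}^r_\ast}$ plus reflexivity of $L^r(\Omega)$ gives a weak limit point $F$, and one checks $\mathcal{P}_z F = u$ by testing against finitely supported functions on $\mathcal{X}$ and passing to the limit, using density and the already-proven continuity of $\mathcal{P}_z$. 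The lower bound $\|F\|_{L^r} \leq C_{1,r}^{-1}|\mathbf{c}(z)|^{-1}\|u\|_{\mathcal{H}^r_\ast}$ then follows from weak lower semicontinuity of the norm. The main obstacle — and the reason this is a genuinely separate theorem rather than a corollary of Theorem \ref{hardypcase} — is the uniformity in $z$ of all constants near the poles $(\tau/2)\mathbb{Z}$: one must track how the kernel estimates, the inversion of the spherical function (whose zeros and the $\mathbf{c}$-function poles interact), and the radial-limit argument all degrade as $z$ approaches $(\tau/2)\mathbb{Z}$, and show that the single factor $|\mathbf{c}(z)|$ captures all of this degradation exactly, with nothing left over. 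This is a matching-of-singularities argument and is the crux of the proof.
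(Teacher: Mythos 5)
Your proposal has the right global shape (upper bound with the factor $|\mathbf{c}(z)|$, converse via the martingale of Theorem \ref{MZ2} plus a recovery of the boundary datum, duality to get the lower bound), but the two mechanisms you rely on both fail, and they fail exactly at the points this theorem is designed to handle. First, the upper bound cannot be obtained by Minkowski plus H\"older with absolute values: since $z$ is real, $|p^{1/2+iz}(x,\omega)|=p^{1/2}(x,\omega)$, so your estimate gives at best $q^{n/2}\|\mathcal{P}_zF(\cdot\,\omega_n)\|_{L^r(K)}\leq \phi_{0}(n)\,q^{n/2}\|F\|_{L^r}\asymp (1+n)\|F\|_{L^r}$, which is not uniform in $n$; the factor $|\mathbf{c}(z)|q^{-n/2}$ can only come from cancellation. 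This is precisely why the paper abandons the convolution argument of Proposition \ref{hardyifpart}: it expands $\mathcal{P}_zF(k\cdot\omega_n)$ in terms of the conditional expectations $\mathcal{E}_j(F)$, performs an Abel summation, and uses (\ref{harishchandra})--(\ref{harishchandraproperties}) to isolate a single factor $\mathbf{c}(z)$ in front of $\mathcal{E}_n(F)$ and of a martingale transform with unimodular multipliers $q^{2izl}$; Doob's maximal inequality (Lemma \ref{martingalemaximalfunction}) and the martingale transform inequality (Lemma \ref{differenceoperatorboundedness}) then give (\ref{2caseifpart}) with constants depending only on $r$ (this is also where the restriction $1<r<\infty$ enters). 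Your ``real-analyticity of $\mathbf{c}$ away from the poles'' plays no role and would not give $z$-independent constants.

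Second, your converse rests on the asymptotics $|\phi_z(x)|\asymp(1+|x|)q^{-|x|/2}$ on the critical line, which is false for $z\in\mathbb{R}\setminus(\tau/2)\mathbb{Z}$: by (\ref{phiz}), $\phi_z(n)=\mathbf{c}(z)q^{(iz-1/2)n}+\mathbf{c}(-z)q^{(-iz-1/2)n}$ oscillates, admits no pointwise lower bound of the form $cq^{-n/2}$, and the factor $(1+n)$ occurs only at the excluded points $(\tau/2)\mathbb{Z}$. Consequently ``dividing by the spherical function to correct the radial decay'' is not available, and the weak limit of $q^{n/2}u|_{S(o,n)}$ need not exist (it oscillates like $\mathbf{c}(z)q^{izn}$); even along subsequences you have no way to identify the limit with the martingale from Theorem \ref{MZ2} or to extract the precise constant $|\mathbf{c}(z)|$. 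The missing ingredient is an inversion formula replacing Proposition \ref{radialconvergence}: the paper proves (Proposition \ref{inversionpoisson}) that the Ces\`aro averages $\frac{1}{n}\sum_{x\in B(o,n)}p^{1/2-iz}(x,\omega)\mathcal{P}_zF(x)$ converge in $L^r(\Omega)$ to $\frac{2(q+1)|\mathbf{c}(z)|^2}{q}F$, a computation based on the Mantero--Zappa formula (\ref{poissondeltaj}) for $\mathcal{P}_z(\Delta_jF)$ and again the martingale transform bounds. The lower bound (\ref{2caseonlyif}) then follows by pairing $T_nF$ against $G\in\mathcal{K}(\Omega)$, applying H\"older on spheres and the already proved upper bound for $\mathcal{P}_{-z}G$, and using this weak convergence; combined with $\varepsilon_nu=\mathcal{P}_zF_n$, Lemma \ref{epsilonnboundedness} and Proposition \ref{martingalelpfunction} this upgrades the martingale to an $L^r$ function. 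Your ``matching-of-singularities'' paragraph correctly names the difficulty but supplies no mechanism for it; the Ces\`aro inversion with the conjugate kernel is that mechanism, and without it the proof does not close.
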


Homogeneous trees are often considered as discrete counterparts of hyperbolic spaces, and more generally noncompact symmetric spaces of rank one (which will be denoted by $X$). In the present context, this analogy has been exploited in both directions in order to exchange ideas and problems from one setting to the other and vice versa. In fact, Theorem \ref{MZ2} can be considered as the tree analogue of the celebrated conjecture by Helgason  \cite{SH1,SH2} which states that every eigenfunction of the Laplace-Beltrami operator $\Delta_{X}$ on $X$ is the Poisson transform of analytic functionals defined on the Furstenberg boundary of $X$ (see also \cite{KKM}). This has set off a flurry of research focusing on refinements of Helgason’s result by imposing various size conditions, ranging from weak $L^{p}$-norms to Hardy-type norms, on the eigenfunctions of $\Delta_{X}$ on $X$. The literature around this area is too numerous to mention. A few representatives are \cite{STN,BS,ADI,KK,K,LR,SP1,SM,S1}.

On the other hand, not much attention was given in characterizing the eigenfunctions of $\mathcal{L}$ that satisfy various size conditions on $\mathcal{X}$. Recently, the problem of characterizing all weak $L^{p}$-eigenfunctions of $\mathcal{L}$ on $\mathcal{X}$ was settled in \cite{KR}. Regarding the Hardy-type eigenfunctions of $\mathcal{L}$ on $\mathcal{X}$, only few results are known which are confined to the case of harmonic functions (see \cite{CCS,KPT,T}). In the context of symmetric spaces, the Poisson boundary representation of harmonic Hardy functions was proved by Stoll \cite{SM}. Later on, Ben Sa\"{\i}d et al. \cite{STN} introduced Hardy-type spaces in a unified manner and a characterization of Hardy-type eigenfunctions corresponding to certain complex eigenvalues of $\Delta_{X}$ (which, in particular includes the harmonic case) were proved on $X$. Theorem \ref{hardypcase} can be considered as the tree analogue of \cite[Theorem 3.6]{STN} proved for symmetric spaces. As in the continuous setup, a key ingredient required to prove Theorem \ref{hardypcase} is the radial convergence result (proved in \cite{KP}) of the normalized Poisson integrals $\phi_{z}(x)^{-1}\mathcal{P}_{z}F(x)$ (where $\phi_{z}$ is the elementary spherical function in $\mathcal{X}$ and $z$ is as in Theorem \ref{hardypcase}), on classical function spaces defined on $\Omega$. However, unlike on symmetric spaces, here one has to get into the realms of the martingale theory on the probability measure space $\Omega$ and derive uniform operator norm estimates of certain one-parameter family of operators $\varepsilon_{n}$ (defined in Section \ref{characterizationpcase}) over function spaces on $\mathcal{X}$.

As we have mentioned earlier, Theorem \ref{hardypcase} leaves out the parameters $z\in\mathbb{R}\setminus(\tau/2)\mathbb{Z}$. This case, in particular, is more interesting and challenging than what we have discussed so far. The reason lies in the fact that for such values of $z$, the spherical functions $\phi_{z}$ oscillates (see (\ref{phiz})), and hence one cannot even define the normalized Poisson integrals $\phi_{z}(x)^{-1}\mathcal{P}_{z}F(x)$. On symmetric spaces of rank one, Strichartz \cite{S1} observed that a similar situation occurs when dealing with the Poisson integrals $P_{\lambda}$ corresponding to all non-zero real parameters $\lambda$. It was also conjectured that for these values of $\lambda$, the eigenfunctions of $\Delta_{X}$ which are Poisson integrals of $L^{2}$-functions on the boundary, are characterized by an $L^{2}$-weight norm on $X$. Ionescu \cite{ADI} proved this conjecture when $X$ is of rank one, which was later extended to higher rank by Kaizuka \cite{KK}. Taking inspiration from Strichartz's conjecture, Boussejra et al. \cite{BS} proved that for all non-zero parameters $\lambda$, the Poisson transform $P_{\lambda}$ associated to a hyperbolic space establishes a bijection between the space of all $L^{r}$-functions defined on the boundary and a suitable subspace of an eigenspace of the Laplace-Beltrami operator and that it preserves certain Hardy-type estimates (see \cite[Theorem B]{BS}). However, as explained in \cite[Remark 2.3]{BS}, only partial results were obtained. We focus on getting its complete extension, hence proving an analogue of \cite[Theorem B]{BS} and settling the conjecture \cite[Remark 2.3]{BS} in the context of homogeneous trees. Theorem \ref{hardy2case} is our endeavor in this pursuit. 

\noindent\textbf{Plan of the paper.} In Section \ref{Section 2} we shall briefly introduce the terminologies and definitions relevant to the subject matter of this article. Here, we shall also define the Hardy spaces on $\mathcal{X}$. Section \ref{characterizationpcase} is devoted to the proof of Theorem \ref{hardypcase}, along with some preparatory results required to prove both of the main results of this paper. Finally, in Section \ref{characterization2case} we shall prove Theorem \ref{hardy2case}.

\noindent\textbf{Generalities.} The letters $\mathbb{N}$, $\mathbb{Z}$, $\mathbb{R}$ and $\mathbb{C}$ will respectively denote the set of natural numbers, ring of integers, field of real and complex numbers. We will denote the set of non-negative integers by $\mathbb{Z}_{+}$. For $z\in \mathbb{C}$, $\Re z$ and $\Im z$ denote respectively the real and imaginary parts of $z$. If $f_{1}$ and $f_{2}$ are two non-negative functions on $X$, we say that $f_{1}\asymp f_{2}$ if there exist positive constants $A$ and $B$ such that $Af_{1}(t)\leq f_{2}(t)\leq Bf_{1}(t)$ for all $t$ in $X$. The letters $C,C_{1},C_{2}$ etc. will be used to denote a finite positive constant whose value may change from one line to another. These constants may be occasionally suffixed to show their dependencies on some important parameters.

\section{Preliminaries}\label{Section 2}
We shall briefly introduce the terminologies and definitions relevant to the subject matter of this article. We shall mainly follow \cite{CS2,FTP1,FTP2}. Our parametrization of various objects such as the Poisson transform and the elementary spherical function will be consistent with that of \cite{CS2} and will differ slightly with that of \cite{FTP1,FTP2}.

\subsection{Homogeneous trees}
A homogeneous tree $\mathcal{X}$ of degree $q+1$ is a connected graph with no cycles where every vertex has $q+1$ neighbours. Throughout this article, we will assume $q\geq 2$. We identify $\mathcal{X}$ with the set of all vertices and endow $\mathcal{X}$ with the canonical discrete distance $d$ defined by the number of edges lying in the unique geodesic path joining the two vertices. We fix an arbitrary reference point $o$ in $\mathcal{X}$ and abbreviate $d(o,x)$ with $|x|$. The tree $\mathcal{X}$ is equipped with the counting measure and we will write $\#E$ to denote the cardinality of a finite subset $E$ of $\mathcal{X}$. For $x\in\mathcal{X}$ and $n\in\mathbb{Z}_{+}$, $B(x,n)$ and $S(x,n)$ will respectively denote the ball and the sphere centered at $x$ and of radius $n$.

Let $G$ be the group of isometries of the metric space $(\mathcal{X},d)$ and $K=\{g\in G:g(o)=o\}$. It is well-known that $G$ acts transitively on $\mathcal{X}$ via the map $(g,x)\mapsto g\cdot x=g(x)$. The action identifies $\mathcal{X}$ with the coset space $G/K$ so that functions on $\mathcal{X}$ identify to right-$K$-invariant functions on $G$ and vice-versa. In particular, the group action induces the following integral formula for a complex-valued function $u$ on $\mathcal{X}$ (see \cite{CMS}):
\begin{equation}\label{formula}
\int\limits_{K}u(k\cdot x)~dk=
\begin{cases}
u(o), & \text{if }x=o,\\
\frac{1}{(q+1)q^{|x|-1}}\sum\limits_{y:|y|=|x|}u(y), & \text{if }x\neq o.
\end{cases}
\end{equation}
Using the above identification, it also follows that radial functions on $\mathcal{X}$, that is, functions which only depend on $|x|$, correspond to $K$-bi-invariant functions on $G$ and vice-versa. If $f:\mathcal{X}\rightarrow\mathbb{C}$ is radial, we shall write $f(n)$ to denote the value of $f$ on $S(o,n)$.

\subsection{The boundary of \texorpdfstring{$\mathcal{X}$}{boundary}}
The boundary $\Omega$ is defined as the set of all infinite geodesic rays, that is, chains of consecutively adjacent vertices, starting from $o$. Once and for all, we fix an arbitrary boundary point $\omega_{o}=\{\omega_{0},\omega_{1},\ldots\}$ and observe that $|\omega_{n}|=n$ for all $n\in\mathbb{Z}_{+}$. For $x\in\mathcal{X}$ and $\omega\in\Omega$, we define $c(x,\omega)=x_{l}$, where $x_{l}$ is the last point lying on $\omega$ in the geodesic path $\{o,x_{1},\ldots,x_{n}\}$ connecting $o$ to $x$. For each $x\in\mathcal{X}$, the sectors
\begin{equation}\label{sectors}
E_{j}(x)=\{\omega\in\Omega:|c(x,\omega)|\geq j\},\quad\text{for all }j\geq 0,
\end{equation}
are compact and form a basis for the topology of $\Omega$. Observe that $E_{0}(x)=\Omega$ and $E_{j}(x)=\emptyset$ for all $j>|x|$. For the sake of simplicity, we use the notation $E(x)$ to abbreviate $E_{|x|}(x)$. Clearly, the collection $\{E(x):x\in S(o,n)\}$ partitions $\Omega$ into $\# S(o,n)$ disjoint open sets. We define the probability measure $\nu$ on these basic open sets by
\begin{equation}\label{measureofsectors}
\nu(E(x))=\begin{cases}
1, & \text{if }x=o,\\
\frac{1}{(q+1)q^{|x|-1}}, & \text{if }x\neq o.
\end{cases}
\end{equation}
Then $(\Omega,\mathcal{M},\nu)$ becomes a probability measure space with the $\sigma$-algebra $\mathcal M$ being generated by the set $\{E(x): x\in \mathcal{X}\}$.  It is easy to see that $K$ acts transitively on $\Omega$. This action leads to the following integral formula for an integrable function $F$ on $\Omega$:
\begin{equation}\label{omegatok}
\int\limits_{K}F(k\cdot\omega_{o})~dk=\int\limits_{\Omega}F(\omega)~d\nu(\omega).
\end{equation}

For each $n\in\mathbb{Z}_{+}$, let $\mathcal M_{n}$ be the sub-algebra of $\mathcal M$ generated by the sets of the form $E(x)$ where $x\in B(o,n)$. It is easy to see that $\{\mathcal M_{n}\}$ is an expanding sequence of $\sigma$-algebras on $\Omega$. In addition, let $\mathcal{K}_{n}(\Omega)$ be a linear space of functions on $\Omega$ which are finite linear combinations of characteristic functions of the sets $E(x)$ where $x$ varies over $B(o,n)$.  Now, define the space
$$\mathcal{K}(\Omega)=\bigcup\limits_{n\geq 0}\mathcal{K}_{n}(\Omega).$$
Since $\mathcal{K}_{n}(\Omega)\subset\mathcal{K}_{n+1}(\Omega)$ for each $n\geq 0$, we find that $F\in\mathcal{K}(\Omega)$ if and only if $F\in\mathcal{K}_{n}(\Omega)$ for some $n$. Functions belonging to this space are often referred to as the cylindrical functions (see, for example \cite[Page 52]{FTP2}). The $n$-th difference $\Delta_{n}(F)$ of a function $F\in\mathcal{K}(\Omega)$ is defined by the formula
\begin{equation}\label{differenceoperators}
\Delta_{n}(F)(\omega)=\mathcal{E}_{n}(F)(\omega)-\mathcal{E}_{n-1}(F)(\omega),
\end{equation}
where $\mathcal{E}_{-1}=0$ and for $n\geq 0$, $\mathcal{E}_{n}(F)$ denotes the $n$-th conditional expectation of $F$ relative to the sub-algebra $\mathcal M_{n}$ which is defined by
\begin{equation}\label{conditionalexpectations}
\mathcal{E}_{n}(F)(\omega)=\frac{1}{\nu(E(x_{n}))}\int\limits_{E(x_{n})}F(\omega^{\prime})~d\nu(\omega^{\prime}),\quad\text{where }\omega=\{x_{j}:j\in\mathbb{Z}_{+}\}\in\Omega.
\end{equation}
It follows from the definition that
\begin{equation}\label{knomegaformula}
F(\omega)=\mathcal{E}_{n}(F)(\omega)=\sum\limits_{j=0}^{n}\Delta_{j}(F)(\omega),\quad\text{whenever }F\in\mathcal{K}_{n}(\Omega)\text{ for some }n\in\mathbb{Z}_{+}.
\end{equation}
A detailed discussion regarding these topics can be found in \cite[Chapter IV]{ST}.

\subsection{The Poisson kernel and the Laplacian}
The Poisson kernel $p(g\cdot o,\omega)$ is defined to be the Radon-Nikodym derivative $d\nu(g^{-1}\cdot\omega)/d\nu(\omega)$. As shown in \cite[Chapter 3, Section 2]{FTP2}, it holds that
\begin{equation}\label{poissonoriginal}
p(x,\omega)=q^{2|c(x,\omega)|-|x|},\quad\text{for all }x\in \mathcal{X},\text{ for all }\omega\in\Omega.
\end{equation}
Since, for a fixed $x\in\mathcal{X}$, $|c(x,\omega)|$ takes only finitely many values as a function of $\omega$, we can decompose the Poisson kernel as
\begin{equation} \label{poissonfull}
p(x,\omega)=\sum\limits_{j=0}^{|x|}q^{2j-|x|}\chi_{E_j(x)\setminus E_{j+1}(x)}(\omega),\quad\text{for all }x\in \mathcal{X},\text{ for all }\omega\in\Omega.
\end{equation}
Using expression (\ref{poissonoriginal}), it also follows that the function $x\mapsto p^{1/2+iz}(x,\omega)$ satisfies $\mathcal{L}p^{1/2+iz}(x,\omega)=\gamma(z)p^{1/2+iz}(x,\omega)$ for all $x\in\mathcal{X}$, where the Laplacian $\mathcal{L}$ is defined by the formula
$$\mathcal{L}u(x)=u(x)-\frac{1}{q+1}\sum\limits_{y:d(x,y)=1}u(y),\quad\text{for all }x\in\mathcal{X},$$
and $\gamma:\mathbb{C}\rightarrow\mathbb{C}$ is an entire function which is defined by
\begin{equation}\label{gammaz}
\gamma(z)=1-\frac{q^{1/2+iz}+q^{1/2-iz}}{q+1}.
\end{equation}

\subsection{The Poisson transform}
For $z\in\mathbb{C}$, the Poisson transform $\mathcal{P}_{z}F$ of a function $F\in\mathcal{K}(\Omega)$ is defined by (see \cite[Chapter 4, Formula (1) at Page 53]{FTP2})
\begin{equation}\label{poissontransform}
\mathcal{P}_{z}F(x)=\int\limits_{\Omega}p^{1/2+iz}(x,\omega)F(\omega)d\nu(\omega),\quad\text{for all }x\in\mathcal{X}.
\end{equation}
Alternatively, the Poisson transform can be viewed as a matrix coefficient of a representation $\pi_z$ of $G$ on $\mathcal{K}(\Omega)$. For details, we refer the interested readers to \cite[Chapter 4]{FTP2}. The notion of Poisson transform can be further extended to elements of the dual space of $\mathcal{K}(\Omega)$, that is, the space of all martingales on the boundary $\Omega$ (see \cite[Page 376]{MZ}). A martingale $\mathbf{F}=\{F_{n}\}_{n\in\mathbb{Z}_{+}}$ is a sequence of measurable functions on $\Omega$ such that $F_{n}\in\mathcal{K}_{n}(\Omega)$ for each $n\geq 0$ and for every $m,n\in\mathbb{Z}_{+}$, $\mathcal{E}_{m}(F_{n})=F_{k}$ where $k=\min\{m,n\}$. It can be shown that each $F\in L^{r}(\Omega)$, $1\leq r\leq\infty$, identifies to a martingale $\mathbf F=\{\mathcal{E}_{n}(F)\}_{n\in\mathbb{Z}_{+}}$ via the conditional expectation. However, the converse is not true.

\begin{Proposition}\label{martingalelpfunction}
Let $1<r\leq\infty$. A martingale $\mathbf{F}=\{F_{n}\}_{n\in\mathbb{Z}_{+}}$ is the conditional expectation of a unique $F\in L^{r}(\Omega)$ if
$$\sup\limits_{n\in\mathbb{Z}_{+}}\|F_{n}\|_{L^{r}(\Omega)}<\infty.$$
Furthermore, if a martingale $\mathbf{F}=\{F_{n}\}_{n\in\mathbb{Z}_{+}}$ satisfies the above condition with $r=1$, then it is the conditional expectation of a unique complex measure $\mu$ on $\Omega$.
\end{Proposition}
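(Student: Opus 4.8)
The plan is to realize $\{F_{n}\}$ as a norm-bounded sequence in a suitable dual Banach space, extract a weak-$\ast$ (or weak) limit point, and then identify that limit as the object whose conditional expectations reproduce the martingale. Since $(\Omega,\mathcal{M},\nu)$ is a probability space, each $\mathcal{E}_{n}$ is a contraction on every $L^{r}(\Omega)$ by Jensen's inequality, and for $1<r<\infty$ the space $L^{r}(\Omega)$ is reflexive. Hence the bounded sequence $\{F_{n}\}$ has a subsequence $\{F_{n_{k}}\}$ converging weakly to some $F$, with $\|F\|_{L^{r}(\Omega)}\le\sup_{n}\|F_{n}\|_{L^{r}(\Omega)}$. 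Fix $n$; for all $k$ with $n_{k}\ge n$ the martingale property gives $\mathcal{E}_{n}(F_{n_{k}})=F_{n}$, and since $\mathcal{E}_{n}$ is a bounded (hence weakly continuous) operator, $F_{n}=\mathcal{E}_{n}(F_{n_{k}})\rightharpoonup\mathcal{E}_{n}(F)$; uniqueness of weak limits forces $\mathcal{E}_{n}(F)=F_{n}$. For uniqueness of $F$, if two such functions $F,G$ exist then $\mathcal{E}_{n}(F-G)=0$ for all $n$, so the finite complex measure $(F-G)\,d\nu$ vanishes on every $E(x)$; since $\{E(x):x\in\mathcal{X}\}\cup\{\emptyset\}$ is closed under intersection and generates $\mathcal{M}$, this forces $F=G$ a.e.

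For $r=\infty$ reflexivity is unavailable, so instead use $L^{\infty}(\Omega)=(L^{1}(\Omega))^{\ast}$: Banach--Alaoglu produces a weak-$\ast$ cluster point $F$ of $\{F_{n}\}$ along a subsequence, and because $\int_{\Omega}\mathcal{E}_{n}(f)\,g\,d\nu=\int_{\Omega}f\,\mathcal{E}_{n}(g)\,d\nu$ for $f\in L^{1}(\Omega)$ and $g\in L^{\infty}(\Omega)$, the operator $\mathcal{E}_{n}$ on $L^{\infty}(\Omega)$ is the adjoint of $\mathcal{E}_{n}$ on $L^{1}(\Omega)$, hence weak-$\ast$ continuous. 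The same argument as above then yields $\mathcal{E}_{n}(F)=F_{n}$, and uniqueness follows exactly as before.

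The case $r=1$ is the one requiring genuine structural input: here $L^{1}(\Omega)$ is not a convenient dual, but $\Omega$ is compact and totally disconnected, so each $\chi_{E(x)}$ lies in $C(\Omega)$ and the space $M(\Omega)$ of complex measures is the dual of $C(\Omega)$. Identifying $F_{n}$ with the measure $F_{n}\,d\nu$, whose total variation equals $\|F_{n}\|_{L^{1}(\Omega)}$, Banach--Alaoglu gives a weak-$\ast$ limit point $\mu\in M(\Omega)$ along a subsequence $\{F_{n_{k}}\}$. Testing against $\chi_{E(x)}$ with $|x|=n$ and invoking the martingale property for $n_{k}\ge n$, one gets $\mu(E(x))=\lim_{k}\int_{E(x)}F_{n_{k}}\,d\nu=\int_{E(x)}F_{n}\,d\nu$, which says precisely that the conditional expectation $\mathcal{E}_{n}(\mu)$ equals $F_{n}$. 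Uniqueness follows because a complex measure on $\mathcal{M}$ is determined by its values on the generating $\pi$-system $\{E(x)\}$.

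The routine verifications are the Jensen-contraction and adjoint identities for $\mathcal{E}_{n}$ and the measure-determination lemma for the $\pi$-system $\{E(x)\}$; the one genuinely structural point to get right is the compact, clopen-based topology of $\Omega$ in the case $r=1$, since it is exactly this that upgrades the $L^{1}$-bound to a bona fide complex measure in $M(\Omega)=C(\Omega)^{\ast}$ rather than merely a finitely additive set function.
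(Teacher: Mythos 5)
Your argument is correct. Note that the paper itself offers no proof of this proposition: it is stated as standard martingale theory, with the reader pointed to Stein's book (Chapter IV), where the usual route is Doob's martingale convergence theorem (an $L^{r}$-bounded martingale with $r>1$ is uniformly integrable, converges a.e.\ and in $L^{r}$, and its limit has the prescribed conditional expectations; for $r=1$ one passes to a limit measure). Your proof replaces the convergence theorem by a purely functional-analytic compactness argument --- weak compactness in the reflexive range, weak-$\ast$ compactness in $L^{\infty}=(L^{1})^{\ast}$ and in $M(\Omega)=C(\Omega)^{\ast}$, combined with the self-adjointness and weak/weak-$\ast$ continuity of $\mathcal{E}_{n}$ and the eventual constancy $\mathcal{E}_{n}(F_{n_{k}})=F_{n}$ --- which is a perfectly legitimate and in some ways more elementary route, since it avoids any a.e.\ convergence machinery; the identification of the limit and the uniqueness via the $\pi$-system $\{E(x)\}$ (which contains $\Omega=E(o)$ and generates $\mathcal{M}$) are both handled correctly. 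Two small points worth making explicit: the extraction of a weak-$\ast$ convergent \emph{subsequence} at $r=\infty$ and $r=1$ uses that the preduals $L^{1}(\Omega,\nu)$ and $C(\Omega)$ are separable (true here because $\mathcal{M}$ is countably generated by the clopen sectors and $\Omega$ is compact metrizable), or else one should argue with weak-$\ast$ cluster points of nets, for which your computation goes through verbatim; and in the $r=1$ case one should recall, as you implicitly do, that the conditional expectation of a measure is defined atomwise by $\mu(E(x))/\nu(E(x))$ for $|x|=n$, so that testing against the continuous functions $\chi_{E(x)}$ indeed identifies $\mathcal{E}_{n}(\mu)$ with $F_{n}$.
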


The Poisson transform of a martingale  $\mathbf{F}=\{F_{n}\}_{n\in\mathbb{Z}_{+}}$ is defined by the formula
$$\mathcal{P}_{z}\mathbf{F}(x)=\lim\limits_{n\rightarrow\infty}~\int\limits_{\Omega}p^{1/2+iz}(x,\omega)F_{n}(\omega)d\nu(\omega),\quad\text{for all }x\in\mathcal{X}.$$
For each $x$, the above sequence is eventually constant and hence the limit exists. Clearly, $\mathcal{P}_{z+\tau}=\mathcal{P}_{z}$ where $\tau=2\pi/\log q$. It is also a well-known fact that $\mathcal{L}(\mathcal{P}_{z}\mathbf{F})=\gamma(z)\mathcal{P}_{z}\mathbf{F}$, for every $z\in\mathbb{C}$ (see \cite{MZ} for details). 

\subsection{The elementary spherical functions}
For $z\in\mathbb{C}$, the elementary spherical function $\phi_z$ is defined as the Poisson transform of the constant function $\bf{1}$. It follows immediately that $|\phi_{z}(x)|\leq\phi_{i\Im z}(x)$ and $\phi_{i\Im z}(x)>0$ for all $x\in\mathcal{X}$ and for all $z\in\mathbb{C}$. It is also known that $\phi_{z}$ is a radial eigenfunction of $\mathcal{L}$ with eigenvalue $\gamma(z)$ and every radial eigenfunction of $\mathcal{L}$ with eigenvalue $\gamma(z)$ is a constant multiple of $\phi_{z}$ (see \cite[Theorem 1]{FTP1}). The explicit formula for $\phi_{z}$ is as follows (see \cite[Theorem 2]{FTP1} and \cite[Chapter 3, Theorem 2.2]{FTP2}):
\begin{equation}\label{phiz}
\phi_z(n)= \begin{cases}
\vspace*{.2cm} \left(\frac{q-1}{q+1}n+1\right)q^{-n/2}, & \text{when } z\in\ \tau\mathbb{Z},\\
\vspace*{.2cm}\left(\frac{q-1}{q+1}n+1\right)q^{-n/2}(-1)^{n}, & \text{when } z\in {\tau/2}+\tau\mathbb{Z},\\
\mathbf{c}(z)q^{{(iz-1/2)}n}+\mathbf{c}(-z)q^{{(-iz-1/2)}n}, & \text{when } z\in\mathbb{C}\setminus(\tau/2)\mathbb{Z},
\end{cases}
\end{equation}
where $\mathbf{c}$ is a meromorphic function (aka the Harish-Chandra's c-function) given by
\begin{equation}\label{harishchandra}
\mathbf{c}(z)=\frac{q^{1/2}}{q+1}\frac{q^{1/2+iz}-q^{-{1/2}-iz}}{q^{iz}-q^{-iz}},\quad\text{for all }z\in \mathbb{C}\setminus(\tau/2)\mathbb{Z}.
\end{equation}
From the explicit formula (\ref{phiz}), we obtain $\phi_{z}=\phi_{-z}=\phi_{z+\tau}$ for all $z\in\mathbb{C}$. It also follows from (\ref{harishchandra}) that
\begin{equation}\label{harishchandraproperties}
\overline{\mathbf{c}(z)}=\mathbf{c}(-z)\quad\text{and}\quad\mathbf{c}(z)+\mathbf{c}(-z)=1,\quad\text{for all }z\in \mathbb{R}\setminus(\tau/2)\mathbb{Z}.
\end{equation}
We further observe that the c-function has neither zero nor pole in the region $\Im z<0$ (see (\ref{harishchandra})). Implementing these facts into (\ref{phiz}), for all $z=\alpha\pm i\delta_{p}$, $1\leq p<2$, there exists $n_{0}\in\mathbb{N}$ such that
\begin{equation}\label{phipointwise1}
A_{p}~q^{-\frac{n}{p^{\prime}}}\leq |\phi_{z}(n)|\leq B_{p}~q^{-\frac{n}{p^{\prime}}},\quad\text{whenever }n\geq n_{0},
\end{equation}
where $A_{p}$ and $B_{p}$ are positive constants which depend only on $p$. When $p=2$ and $z\in (\tau/2)\Z$, the expression (\ref{phiz}) yields
\begin{equation}\label{phipointwise2}
A~(1+n)~q^{-\frac{n}{2}}\leq |\phi_{z}(n)|\leq B~(1+n)~q^{-\frac{n}{2}},\quad\text{for all }n\in\mathbb{Z}_{+}.
\end{equation}
However, due to the meromorphic nature of the Harish-Chandra's c-function, the pointwise estimate of $\phi_{z}$, when $z\in\mathbb{R}\setminus(\tau/2)\Z$, varies slightly in comparison to that of the $p$ case (see (\ref{phipointwise1})). Indeed, for all $z\in\mathbb{R}\setminus(\tau/2)\Z$,
$$|\phi_{z}(n)|\leq 2|\mathbf{c}(z)|~q^{-\frac{n}{2}},\quad\text{for all }n\in\mathbb{Z}_{+}.$$

\subsection{Hardy-type spaces on \texorpdfstring{$\mathcal{X}$}{hardy}}
We will now define the Hardy-type spaces $\mathcal{H}^{r}_{p}(\mathcal{X})$ on $\mathcal{X}$. Fix $1\leq p\leq 2$. Let $\mathcal{H}^{r}_{p}(\mathcal{X})$ be the space of all complex-valued functions $u$ on $\mathcal{X}$ for which
\begin{equation}\label{hardy1}
\|u\|_{\mathcal{H}^{r}_{p}(\mathcal{X})}:=\sup\limits_{n\in\mathbb{Z}_{+}}~\phi_{i\delta_{p}}(n)^{-1}\left(~\int\limits_{K}|u(k\cdot\omega_{n})|^{r}dk~\right)^{1/r}<\infty,\quad\text{if }1\leq r<\infty,
\end{equation}
and when $r=\infty$,
\begin{equation}\label{hardy2}
\|u\|_{\mathcal{H}^{\infty}_{p}(\mathcal{X})}:=\sup\limits_{x\in\mathcal{X}}~\phi_{i\delta_{p}}(x)^{-1}|u(x)|<\infty.
\end{equation}
Furthermore, we denote by $\mathcal{H}^{r}_{\ast}(\mathcal{X})$ the space of all complex-valued functions $u$ on $\mathcal{X}$ for which
\begin{equation}\label{2hardy1}
\|u\|_{\mathcal{H}^{r}_{\ast}(\mathcal{X})}:=\sup\limits_{n\in\mathbb{Z}_{+}}~q^{n/2}\left(~\int\limits_{K}|u(k\cdot\omega_{n})|^{r}dk~\right)^{1/r}<\infty,\quad\text{if }1\leq r<\infty,
\end{equation}
and when $r=\infty$,
\begin{equation}\label{2hardy2}
\|u\|_{\mathcal{H}^{\infty}_{\ast}(\mathcal{X})}:=\sup\limits_{x\in\mathcal{X}}~q^{|x|/2}|u(x)|<\infty.
\end{equation}

\section{Characterization of \texorpdfstring{$\mathcal{H}^{r}_{p}(\mathcal{X})$}{pcase}-eigenfunctions of \texorpdfstring{$\mathcal{L}$}{laplacian}}\label{characterizationpcase}
Here we shall prove Theorem \ref{hardypcase}. We begin by recalling the following radial convergence result of the normalized Poisson transform proved in \cite{KP}. According to our parametrization of the Poisson transform, the result can be restated as follows:

\begin{Proposition}[{{\cite[Proposition 1]{KP}}}]\label{radialconvergence}
Let $1\leq p\leq 2$. Suppose that $z=\alpha+i\delta_{p^{\prime}}$, where $\alpha$ satisfies either of the following forms:
\begin{enumerate}
\item[(i)] If $1\leq p<2$, $\alpha\in\mathbb{R}$.
\item[(ii)] If $p=2$, $\alpha\in(\tau/2)\mathbb{Z}$.
\end{enumerate}
If $1\leq r<\infty$ and if $F\in L^{r}(\Omega)$, then
$$\lim\limits_{n\rightarrow\infty} \phi_{z}(n)^{-1}\mathcal{P}_{z}F(k\cdot\omega_{n})=F(k\cdot\omega_{o}),\quad\text{in }L^{r}(K)\text{-norm}.$$
Moreover, if $r=\infty$ and if $F\in L^{\infty}(\Omega)$, the convergence above takes place in weak$^{\ast}$-topology of $L^{\infty}(K)$.
\end{Proposition}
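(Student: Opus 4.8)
The plan is to convert this radial boundary-convergence statement into a question about $L^{r}$-convergence of weighted means of the martingale $\{\mathcal{E}_{j}(F)\}$ on the probability space $(\Omega,\mathcal{M},\nu)$, via an \emph{exact} formula for $\mathcal{P}_{z}F$ on the sphere $S(o,N)$. Fix $F\in L^{r}(\Omega)\subseteq L^{1}(\Omega)$, so that $\mathcal{P}_{z}F(x)=\int_{\Omega}p^{1/2+iz}(x,\omega)F(\omega)\,d\nu(\omega)$ for every $x$. Because the Poisson kernel is $K$-equivariant and $\nu$ is $K$-invariant, one has $\mathcal{P}_{z}F(k\cdot\omega_{N})=\int_{\Omega}p^{1/2+iz}(\omega_{N},\omega)F(k\cdot\omega)\,d\nu(\omega)$, so it is enough to work along the fixed ray and then move the base point $\omega_{o}$ to $k\cdot\omega_{o}$. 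Inserting the sector decomposition (\ref{poissonfull}), noting that $E_{j}(\omega_{N})=E(\omega_{j})$ for $0\le j\le N$, and performing a summation by parts converts the telescoping integrals $\int_{E(\omega_{j})\setminus E(\omega_{j+1})}F\,d\nu$ into the conditional expectations of (\ref{conditionalexpectations}), giving the exact identity
\[
\mathcal{P}_{z}F(k\cdot\omega_{N})=q^{-N(1/2+iz)}\int_{\Omega}F\,d\nu+\bigl(1-q^{-(1+2iz)}\bigr)\frac{q}{q+1}\,q^{-N(1/2+iz)}\sum_{j=1}^{N}q^{2ijz}\,\mathcal{E}_{j}(F)(k\cdot\omega_{o})
\]
for every $N$ and every $k\in K$. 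Since, by (\ref{omegatok}), the map $H\mapsto[k\mapsto H(k\cdot\omega_{o})]$ is an isometry of $L^{r}(\Omega)$ into $L^{r}(K)$, the problem reduces to showing that the right-hand side, divided by $\phi_{z}(N)$ and regarded as a function of $\omega\in\Omega$, converges to $F$ in $L^{r}(\Omega)$.

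For $1\le p<2$ I would reindex by $l=N-j$ and write the normalized expression as $\beta_{N}\int_{\Omega}F\,d\nu+\gamma_{N}\bigl(1-q^{-(1+2iz)}\bigr)\frac{q}{q+1}\sum_{l=0}^{N-1}q^{-2izl}\mathcal{E}_{N-l}(F)$, where $\gamma_{N}=\phi_{z}(N)^{-1}q^{(iz-1/2)N}$ and $\beta_{N}=\gamma_{N}q^{-2izN}$. As $z=\alpha+i\delta_{p^{\prime}}$ has $\Im z=\delta_{p^{\prime}}<0$, the c-function has there neither pole nor zero, so the third line of (\ref{phiz}) gives $\phi_{z}(N)=\mathbf{c}(z)q^{(iz-1/2)N}+\mathbf{c}(-z)q^{(-iz-1/2)N}$ with dominant first term; hence $\gamma_{N}\to\mathbf{c}(z)^{-1}$ and $\beta_{N}\to0$. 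The weights $w_{l}=q^{-2izl}$ satisfy $|w_{l}|=q^{(1-2/p)l}$ and are summable precisely because $p<2$, with $\sum_{l\ge0}w_{l}=(1-q^{-2iz})^{-1}$. Combining the $L^{r}$-martingale convergence $\mathcal{E}_{m}(F)\to F$ with the contraction bound $\|\mathcal{E}_{m}(F)\|_{L^{r}}\le\|F\|_{L^{r}}$, a standard Toeplitz argument yields $\sum_{l=0}^{N-1}w_{l}\mathcal{E}_{N-l}(F)\to(1-q^{-2iz})^{-1}F$ in $L^{r}(\Omega)$. The argument then closes with the elementary identity $\frac{q}{q+1}\bigl(1-q^{-(1+2iz)}\bigr)/(1-q^{-2iz})=\mathbf{c}(z)$, read off directly from (\ref{harishchandra}), which collapses all constants and produces the limit $F$.

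The case $p=2$, with $z=\alpha\in(\tau/2)\mathbb{Z}$, is where the genuine difficulty lies and must be treated separately: now $q^{2iz}=1$, so the weights degenerate to $w_{l}\equiv1$ and cease to be summable, while $\phi_{z}$ carries the extra linear factor of (\ref{phiz}). The exact identity collapses to $\mathcal{P}_{z}F(k\cdot\omega_{N})=\eta_{N}\bigl[\int_{\Omega}F\,d\nu+\frac{q-1}{q+1}\sum_{j=1}^{N}\mathcal{E}_{j}(F)(k\cdot\omega_{o})\bigr]$ with the common factor $\eta_{N}=q^{-N(1/2+iz)}$ of modulus $q^{-N/2}$, while $\phi_{z}(N)=\eta_{N}\bigl(\frac{q-1}{q+1}N+1\bigr)$; the factor $\eta_{N}$ cancels upon normalization, so the normalized transform is \emph{exactly} a Cesàro mean of $\{\mathcal{E}_{j}(F)\}$. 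Since $\mathcal{E}_{j}(F)\to F$ in $L^{r}(\Omega)$, so do its Cesàro means, and the bounded lower-order terms vanish after division by the linearly growing denominator. Finally, for $r=\infty$ I would replace $L^{r}$-martingale convergence by the weak$^{\ast}$ convergence $\mathcal{E}_{m}(F)\to F$ in $L^{\infty}(\Omega)$ (itself a consequence of the self-adjointness of $\mathcal{E}_{m}$ and the $L^{1}$-convergence $\mathcal{E}_{m}(G)\to G$ for $G\in L^{1}$) and run the same summation and Cesàro manipulations tested against $G\in L^{1}$, transferring the conclusion to $L^{\infty}(K)$ by the weak$^{\ast}$ continuity of the pullback. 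The conceptual crux, and the only real subtlety, is the dichotomy dictated by the exact formula: a summable geometric weight giving an approximate-identity type limit when $p<2$, against non-summable unit weights that force Cesàro summation, exactly compensated by the linear growth of $\phi_{z}$, when $p=2$.
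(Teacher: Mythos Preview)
The paper does not prove this proposition: it is quoted from Kor\'anyi--Picardello \cite{KP} and used as a black box in Section~\ref{characterizationpcase}. So there is no in-paper argument to compare against.

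That said, your proof is correct. The exact identity you derive,
\[
\mathcal{P}_{z}F(k\cdot\omega_{N})=q^{-(1/2+iz)N}\Bigl(\mathcal{E}_{0}(F)(k\cdot\omega_{o})+\tfrac{q-q^{-2iz}}{q+1}\sum_{j=1}^{N}q^{2izj}\,\mathcal{E}_{j}(F)(k\cdot\omega_{o})\Bigr),
\]
is in fact obtained later in the paper (for real $z$) by the same summation by parts on (\ref{poissonfull}), at the start of the proof of Theorem~\ref{hardy2case}; your coefficient $(1-q^{-(1+2iz)})\tfrac{q}{q+1}$ equals the paper's $\tfrac{q-q^{-2iz}}{q+1}$. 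The constant identification $\tfrac{q}{q+1}(1-q^{-(1+2iz)})/(1-q^{-2iz})=\mathbf{c}(z)$ is a direct rewriting of (\ref{harishchandra}), and both the Toeplitz step for $p<2$ (summable weights $|w_{l}|=q^{(1-2/p)l}$) and the Ces\`aro step for $p=2$ are correctly set up against the $L^{r}$-martingale convergence $\mathcal{E}_{m}(F)\to F$.

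The only place that merits one more sentence is the $r=\infty$ transfer from $\Omega$ to $K$. The functions $k\mapsto\mathcal{E}_{j}(F)(k\cdot\omega_{o})$ are pullbacks from $\Omega$, so for a fixed test function $g\in L^{1}(K)$ push forward the measure $g\,dk$ under $k\mapsto k\cdot\omega_{o}$ to get $G\in L^{1}(\Omega)$ with $\int_{K}H(k\cdot\omega_{o})g(k)\,dk=\int_{\Omega}H\,G\,d\nu$ for every $H\in L^{\infty}(\Omega)$; then your weak$^{\ast}$ argument on $\Omega$ immediately gives the claimed weak$^{\ast}$ convergence in $L^{\infty}(K)$. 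With this clarification the proof is complete.
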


We shall now prove a couple of preparatory results. The following proposition is fairly straightforward, but for the sake of completeness, we provide a proof.

\begin{Proposition}\label{hardyifpart}
Let $1\leq p\leq 2$. Suppose that $z=\alpha+i\delta_{p^{\prime}}$, where $\alpha$ satisfies either of the following forms:
\begin{enumerate}
\item[(i)] If $1\leq p<2$, $\alpha\in\mathbb{R}$.
\item[(ii)] If $p=2$, $\alpha\in(\tau/2)\mathbb{Z}$.
\end{enumerate}
Then there exists a positive constant $C_{p}$, depending only on $p$, such that
$$C_{p}\|F\|_{L^{r}(\Omega)}\leq\|\mathcal{P}_{z}F\|_{\mathcal{H}^{r}_{p}(\mathcal{X})}\leq\|F\|_{L^{r}(\Omega)},\quad\text{for all }F\in L^{r}(\Omega),\text{ for all }1\leq r\leq \infty.$$
\end{Proposition}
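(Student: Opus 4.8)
The plan is to establish the two inequalities separately, with the upper bound being an easy consequence of the pointwise estimate on the Poisson kernel and the lower bound following from the radial convergence result of Proposition \ref{radialconvergence}.

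First I would prove the upper bound $\|\mathcal{P}_{z}F\|_{\mathcal{H}^{r}_{p}(\mathcal{X})}\leq\|F\|_{L^{r}(\Omega)}$. The key observation is that since $z=\alpha+i\delta_{p'}$, we have $\Im z=\delta_{p'}=-\delta_{p}$, so $p^{1/2+iz}(x,\omega)$ has modulus $p^{1/2-\delta_p}(x,\omega)=p^{1/p'}(x,\omega)$ (using $1/2-\delta_p=1/p'$). Hence $|\mathcal{P}_{z}F(x)|\leq \int_{\Omega}p^{1/p'}(x,\omega)|F(\omega)|\,d\nu(\omega)$. Writing this as $\int_{\Omega}p^{1/(rp')}(x,\omega)\cdot p^{1/(r'p')}(x,\omega)|F(\omega)|\,d\nu(\omega)$ — one should be a little careful when $r=1$ or $r=\infty$, handling those by direct estimates — and applying Hölder's inequality in the exponents $r,r'$, I get $|\mathcal{P}_{z}F(x)|^{r}\leq \left(\int_{\Omega}p^{1/p'}(x,\omega)\,d\nu(\omega)\right)^{r/r'}\int_{\Omega}p^{1/p'}(x,\omega)|F(\omega)|^{r}\,d\nu(\omega)$. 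Now $\int_{\Omega}p^{1/p'}(x,\omega)\,d\nu(\omega)=\mathcal{P}_{z}\mathbf{1}(x)$ has modulus at most $\phi_{i\delta_p}(x)$ (indeed $p^{1/p'}=p^{1/2-\delta_p}=|p^{1/2+iz}|$ and $|\mathcal{P}_z\mathbf 1(x)|\le \phi_{i\Im z}(x)=\phi_{-i\delta_p}(x)=\phi_{i\delta_p}(x)$). Integrating over $K$ in $x=k\cdot\omega_n$, using the $K$-invariance of $\nu$ (so that $\int_K p^{1/p'}(k\cdot\omega_n,\omega)\,dk$ is again bounded by $\phi_{i\delta_p}(n)$), and Fubini, the bound $\phi_{i\delta_p}(n)^{-1}\left(\int_K|\mathcal{P}_zF(k\cdot\omega_n)|^r\,dk\right)^{1/r}\le\|F\|_{L^r(\Omega)}$ falls out after taking the $r$-th root; the $r=\infty$ case is immediate from $|\mathcal P_zF(x)|\le \phi_{i\delta_p}(x)\|F\|_\infty$.

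For the lower bound, I would invoke Proposition \ref{radialconvergence}: for $1\le r<\infty$ and $F\in L^r(\Omega)$, $\phi_z(n)^{-1}\mathcal{P}_zF(k\cdot\omega_n)\to F(k\cdot\omega_o)$ in $L^r(K)$-norm, hence $\left(\int_K|\phi_z(n)^{-1}\mathcal{P}_zF(k\cdot\omega_n)|^r\,dk\right)^{1/r}\to \|F(\cdot\omega_o)\|_{L^r(K)}=\|F\|_{L^r(\Omega)}$ by the integral formula (\ref{omegatok}). Since $|\phi_z(n)|\asymp \phi_{i\delta_p}(n)$ — this is exactly what the pointwise estimates (\ref{phipointwise1}) (for $1\le p<2$) and (\ref{phipointwise2}) (for $p=2$, $z\in(\tau/2)\mathbb Z$) give, with comparison constants depending only on $p$ — there is a constant $C_p>0$ with $\phi_{i\delta_p}(n)^{-1}\left(\int_K|\mathcal{P}_zF(k\cdot\omega_n)|^r\,dk\right)^{1/r}\ge \tfrac{1}{B_p}|\phi_z(n)|^{-1}\left(\int_K|\mathcal P_zF(k\cdot\omega_n)|^r\,dk\right)^{1/r}$ (adjusting on the finitely many $n<n_0$), and taking the supremum over $n$ (equivalently $\liminf$) gives $\|\mathcal P_zF\|_{\mathcal H^r_p(\mathcal X)}\ge C_p\|F\|_{L^r(\Omega)}$. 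For $r=\infty$ I would use the weak$^*$-convergence part of Proposition \ref{radialconvergence}, which still yields $\liminf_n \|\phi_z(n)^{-1}\mathcal P_zF(\cdot\omega_n)\|_{L^\infty(K)}\ge \|F\|_{L^\infty(K)}$ by lower semicontinuity of the norm under weak$^*$ limits, and then combine with $|\phi_z(n)|\asymp\phi_{i\delta_p}(n)$ as before.

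The only mildly delicate points are: keeping track that the comparison $|\phi_z(n)|\asymp\phi_{i\delta_p}(n)$ holds with constants depending only on $p$ and not on $\alpha$ (which is precisely the content of (\ref{phipointwise1}) and (\ref{phipointwise2}), noting $\delta_{p'}=-\delta_p$ so $\phi_{i\delta_p}=\phi_{i\Im z}$ up to the real-part shift, and that the estimates in the excerpt are stated uniformly in $\alpha$), and handling the endpoint exponents $r\in\{1,\infty\}$ in the Hölder step of the upper bound; neither is a real obstacle. The substantive input — the radial boundary convergence of normalized Poisson integrals — is imported wholesale from \cite{KP}, so this proposition is genuinely routine, as the paper's own remark anticipates.
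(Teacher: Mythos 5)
Your argument follows essentially the same route as the paper: the upper bound rests on the pointwise kernel bound together with the identity $\int_\Omega|p^{1/2+iz}(x,\omega)|\,d\nu(\omega)=\phi_{i\delta_p}(x)$ (the paper packages this as Young's/Minkowski's inequality for a convolution on $K$, you as H\"older plus Fubini — the same content), and the lower bound comes from Proposition \ref{radialconvergence} combined with $|\phi_z(n)|\asymp\phi_{i\delta_p}(n)$ from (\ref{phipointwise1})--(\ref{phipointwise2}), your $r=\infty$ case using weak$^*$ lower semicontinuity of the norm where the paper runs the equivalent duality pairing, and your $r<\infty$ case using convergence of norms where the paper extracts an a.e.\ convergent subsequence and applies Fatou. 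One slip to correct: since $\Re(1/2+iz)=1/2-\delta_{p'}=1/2+\delta_p=1/p$, the modulus of $p^{1/2+iz}$ is $p^{1/p}$, not $p^{1/p'}$, and $\int_\Omega p^{1/p'}(x,\omega)\,d\nu(\omega)$ is not $\mathcal{P}_z\mathbf{1}(x)$ (that transform is the oscillatory $\phi_z$); fortunately both $p^{1/p}$ and $p^{1/p'}$ integrate over $\Omega$ (and over $K$ in the Fubini step) to $\phi_{i\delta_{p'}}=\phi_{i\delta_p}$, so replacing $1/p'$ by $1/p$ throughout your chain leaves the final estimate, and hence the proof, intact.
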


\begin{proof}
Fix $1\leq p\leq 2$ and let $z$ be as in the hypothesis of this proposition. Expressing the Poisson transform (\ref{poissontransform}) as an integral over $K$ using (\ref{omegatok}) and then plugging in the formula $p(k\cdot \omega_{n},k_{1}\cdot\omega_{o})=p(\omega_{n},k^{-1}k_{1}\cdot\omega_{o})$ of the Poisson kernel (see \cite[Page 285]{FTP1}), we obtain
$$\mathcal{P}_{z}F(k\cdot\omega_{n})=\int\limits_{K}p^{1/2+iz}(\omega_{n},k^{-1}k_{1}\cdot\omega_{o})~F(k_{1}\cdot\omega_{o})~dk_{1},\quad\text{for all }F\in \mathcal{K}(\Omega).$$
Setting $p_{1}(\omega_{n},k_{2}\cdot\omega_{o})=p(\omega_{n},k^{-1}_{2}\cdot\omega_{o})$ for all $k_{2}\in K$, the integral above takes the form
$$\mathcal{P}_{z}F(k\cdot\omega_{n})=\left(F\ast p^{1/2+iz}_{1}(\omega_{n},\cdot)\right)(k\cdot\omega_{o}).$$
Now taking modulus on both the sides of the expression above and using the Minkowski's inequality, we obtain
$$\|\mathcal{P}_{z}F(\cdot\cdot\omega_{n})\|_{L^{r}(K)}\leq \|p^{1/p}_{1}(\omega_{n},\cdot)\|_{L^{1}(K)}\|F\|_{L^{r}(K)},\quad\text{for all }1\leq r\leq \infty.$$
Since $\|p^{1/p}_{1}(\omega_{n},\cdot)\|_{L^{1}(K)}=\phi_{i\delta_{p^{\prime}}}(n)=\phi_{i\delta_{p}}(n)$ for all $n$, we get
\begin{equation}\label{pcasefirstinequality}
\|\mathcal{P}_{z}F\|_{\mathcal{H}^{r}_{p}(\mathcal{X})}\leq\|F\|_{L^{r}(\Omega)},\quad\text{for all }F\in L^{r}(\Omega),\text{ for all }1\leq r\leq \infty.
\end{equation}

Conversely, if $p$ and $z$ satisfies the hypothesis of this proposition then from (\ref{pcasefirstinequality}), we have $\mathcal{P}_{z}F\in\mathcal{H}^{r}_{p}(\mathcal{X})$ for all $F\in L^{r}(\Omega)$, where $1\leq r\leq\infty$. Next, we fix $F\in L^{r}(\Omega)$ for some $1\leq r<\infty$. Using Proposition \ref{radialconvergence}, we get a strictly increasing subsequence $\{n_{m}\}$ of natural numbers such that 
$$\lim\limits_{n_{m}\rightarrow\infty}\phi_{z}(n_{m})^{-1}\mathcal{P}_{z}F(k\cdot\omega_{n_{m}})=F(k\cdot\omega_{o}),\quad\text{for almost every }k\in K.$$
Using Fatou's lemma and the fact that $|\phi_{z}(n)|\geq A_{p}~|\phi_{i\delta_{p}}(n)|$ for all $n$ large enough (see (\ref{phipointwise1}) and (\ref{phipointwise2})), we finally have
$$\|F\|_{L^{r}(\Omega)}\leq B_{p}~\|\mathcal{P}_{z}F\|_{\mathcal{H}^{r}_{p}(\mathcal{X})},\quad\text{for all }F\in L^{r}(\Omega),\text{ for all }1\leq r\leq \infty,$$
as claimed. We next turn to the case $r=\infty$. Once again using Proposition \ref{radialconvergence}, we obtain
$$\lim\limits_{n\rightarrow\infty}\int\limits_{K}G(k\cdot\omega_{o})~\frac{\mathcal{P}_{z}F(k\cdot\omega_{n})}{\phi_{z}(n)}~dk=\int\limits_{K}G(k\cdot\omega_{o})~F(k\cdot\omega_{o})~dk,\quad\text{for all }G\in L^{1}(\Omega).$$
Since for all $n\geq 0$,
$$\left|~\int\limits_{K}G(k\cdot\omega_{o})~\frac{\mathcal{P}_{z}F(k\cdot\omega_{n})}{\phi_{z}(n)}~dk~\right|\leq A_{p}~\|G\|_{L^{1}(K)}\|\mathcal{P}_{z}F\|_{\mathcal{H}^{\infty}_{p}(\mathcal{X})},$$
using the duality argument, we finally get $C_{p}~\|F\|_{L^{\infty}(\Omega)}\leq \|\mathcal{P}_{z}F\|_{\mathcal{H}^{\infty}_{p}(\mathcal{X})}$, which completes the proof.
\end{proof}

To proceed further, we need to study the uniform Hardy-type boundedness properties of the operators $\varepsilon_{n}$, $n\in\mathbb{Z}_{+}$, introduced in \cite{MZ}. Given a complex-valued function $u$ on $\mathcal{X}$, we define $\varepsilon_{n}u$ by the rule (see \cite[Page 378]{MZ})
\begin{equation}\label{epsilonn}
\varepsilon_{n}u(x)=\frac{1}{\#\mathscr{S}(n,x)}\sum\limits_{y\in\mathscr{S}(n,x)}u(y),\quad\text{for all }x\in\mathcal{X},
\end{equation}
where
$$\mathscr{S}(n,x)= \begin{cases}
\vspace*{.2cm} \{x\}&\text{if }|x|\leq n,\\
\vspace*{.2cm} \{y\in\mathfrak{X}: |y|=|x|, x_{n}=y_{n}\}&\text{if }|x|>n,\\
\end{cases}$$
and $\{o=x_{0},\ldots,x_{n},\ldots,x_{n+k}=x\}$ and $\{o=y_{0},\ldots,y_{n},\ldots,y_{n+k}=y\}$ are geodesic rays connecting $o$ to $x$ and $o$ to $y$ respectively. The operators $\varepsilon_{n}$ are essentially the analogue of the conditional expectations $\mathcal{E}_{n}$ defined in Section \ref{Section 2} for functions on $\Omega$.

\begin{Lemma}\label{epsilonnboundedness}
Let $1\leq p\leq 2$ and $1\leq r\leq\infty$. For every $n\in\mathbb{Z}_{+}$, the operators $\varepsilon_{n}$ defined by (\ref{epsilonn}) are bounded from $\mathcal{H}^{r}_{p}(\mathcal{X})$ to itself and $\mathcal{H}^{r}_{\ast}(\mathcal{X})$ to itself. In particular, for all $n\in\mathbb{Z}_{+}$,
\begin{align*}
\|\varepsilon_{n}u\|_{\mathcal{H}^{r}_{p}(\mathcal{X})}&\leq\|u\|_{\mathcal{H}^{r}_{p}(\mathcal{X})},\quad\text{for all }u\in \mathcal{H}^{r}_{p}(\mathcal{X}),\\
\|\varepsilon_{n}u\|_{\mathcal{H}^{r}_{\ast}(\mathcal{X})}&\leq \|u\|_{\mathcal{H}^{r}_{\ast}(\mathcal{X})},\quad\text{for all }u\in \mathcal{H}^{r}_{\ast}(\mathcal{X}).
\end{align*}
\end{Lemma}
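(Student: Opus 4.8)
The plan is to show that each $\varepsilon_{n}$ is in fact a \emph{contraction} on both spaces, and that this comes down to a single application of Jensen's inequality once the $K$-average appearing in the Hardy norms is rewritten as a sum over a sphere centred at $o$ by means of formula (\ref{formula}). The structural point is that $\varepsilon_{n}$ is a discrete conditional expectation adapted to the level-$n$ filtration of $\mathcal{X}$: for $|x|>n$ it replaces $u(x)$ by the average of $u$ over the ``level-$n$ sub-sphere'' $\mathscr{S}_{x_{n}}:=\{y:|y|=|x|,\ y_{n}=x_{n}\}$ of $S(o,|x|)$, while leaving $u$ untouched on $B(o,n)$. In particular the restriction of $\varepsilon_{n}u$ to a sphere $S(o,m)$ depends only on the restriction of $u$ to that same sphere.

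Fix $1\le p\le 2$, $1\le r<\infty$ and $n\in\mathbb{Z}_{+}$. For a fixed radius $m\in\mathbb{Z}_{+}$, since $|k\cdot\omega_{m}|=m$ for every $k\in K$, formula (\ref{formula}) gives $\int_{K}|\varepsilon_{n}u(k\cdot\omega_{m})|^{r}\,dk=\tfrac{1}{\#S(o,m)}\sum_{|y|=m}|\varepsilon_{n}u(y)|^{r}$, and the same identity with $u$ in place of $\varepsilon_{n}u$. If $m\le n$ then $\varepsilon_{n}u=u$ on $S(o,m)$ and there is nothing to prove. If $m>n$, I would partition $S(o,m)$ into the fibres $\mathscr{S}_{w}$, $w\in S(o,n)$, of the map $y\mapsto y_{n}$; these all have the same cardinality $N:=\#\mathscr{S}_{w}=\#S(o,m)/\#S(o,n)$, and on $\mathscr{S}_{w}$ the function $\varepsilon_{n}u$ is the constant $N^{-1}\sum_{y\in\mathscr{S}_{w}}u(y)$. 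Convexity of $t\mapsto|t|^{r}$ then gives $\bigl|N^{-1}\sum_{y\in\mathscr{S}_{w}}u(y)\bigr|^{r}\le N^{-1}\sum_{y\in\mathscr{S}_{w}}|u(y)|^{r}$; summing over $w\in S(o,n)$, where each sub-sphere contributes $N$ equal terms to $\sum_{|y|=m}$, collapses the right-hand side to $\sum_{|y|=m}|u(y)|^{r}$. Hence $\int_{K}|\varepsilon_{n}u(k\cdot\omega_{m})|^{r}\,dk\le\int_{K}|u(k\cdot\omega_{m})|^{r}\,dk$ for every $m$. Multiplying by $\phi_{i\delta_{p}}(m)^{-r}$, taking the $r$-th root, and passing to the supremum over $m$ yields $\|\varepsilon_{n}u\|_{\mathcal{H}^{r}_{p}(\mathcal{X})}\le\|u\|_{\mathcal{H}^{r}_{p}(\mathcal{X})}$; since the weight $q^{mr/2}$ occurring in the $\mathcal{H}^{r}_{\ast}(\mathcal{X})$ norm again depends only on $m$, the very same sphere-level inequality gives $\|\varepsilon_{n}u\|_{\mathcal{H}^{r}_{\ast}(\mathcal{X})}\le\|u\|_{\mathcal{H}^{r}_{\ast}(\mathcal{X})}$.

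For $r=\infty$ the argument is pointwise. For any $x\in\mathcal{X}$, $\varepsilon_{n}u(x)$ is an average of the values $u(y)$ with $y\in\mathscr{S}(n,x)\subset S(o,|x|)$, so $|\varepsilon_{n}u(x)|\le\sup_{|y|=|x|}|u(y)|$. Since $\phi_{i\delta_{p}}$ (respectively $q^{|\cdot|/2}$) is radial, for such $y$ one has $\phi_{i\delta_{p}}(x)^{-1}|u(y)|=\phi_{i\delta_{p}}(y)^{-1}|u(y)|\le\|u\|_{\mathcal{H}^{\infty}_{p}(\mathcal{X})}$ (respectively $q^{|x|/2}|u(y)|=q^{|y|/2}|u(y)|\le\|u\|_{\mathcal{H}^{\infty}_{\ast}(\mathcal{X})}$), and taking the supremum over $x$ settles both cases.

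I do not anticipate a genuine obstacle: the proof is essentially Jensen's inequality together with formula (\ref{formula}). The only points that require a little care are the combinatorial identities $\#S(o,m)=(q+1)q^{m-1}$ for $m\ge1$ and $\#\mathscr{S}(n,x)=\#S(o,|x|)/\#S(o,n)$ for $|x|>n$ (this also covers the degenerate case $n=0$, where $\varepsilon_{0}$ is simply radialization), and the observation that these fibre cardinalities do not depend on $w$ --- which is precisely what makes the single application of convexity enough.
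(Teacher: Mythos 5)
Your argument is correct, and it shares the paper's overall skeleton: fix a radius $m$, use formula (\ref{formula}) to convert the $K$-integral in the Hardy norms into a normalized sum over $S(o,m)$, observe that $\varepsilon_{n}u$ on $S(o,m)$ depends only on $u$ on $S(o,m)$ and equals $u$ there when $m\leq n$, and then reduce everything to a contraction estimate on each sphere. Where you differ is in how that sphere-level contraction is proved for $m>n$. The paper views $\varepsilon_{n}$ as the conditional expectation onto the sub-$\sigma$-algebra $\mathscr{A}_{m,n}$ of $\mathscr{A}_{m}$ generated by the fibres $\mathscr{S}(n,y)$, verifies that it fixes $\mathscr{A}_{m,n}$-measurable functions and is self-adjoint with respect to counting measure, and then obtains $\|\varepsilon_{n}u\|_{L^{r}(S(o,m))}\leq\|u\|_{L^{r}(S(o,m))}$ by an $L^{r}$--$L^{r^{\prime}}$ duality argument valid simultaneously for all $1\leq r\leq\infty$; these two properties are reused later in the paper's Remark on the maximal operator $\varepsilon^{\ast}$, so the paper gets extra mileage out of them. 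You instead apply Jensen's inequality (convexity of $t\mapsto|t|^{r}$) on each fibre and sum over the fibres, which is more elementary and avoids duality, at the price of handling $r=\infty$ by a separate (trivial, pointwise) argument using the radiality of the weights $\phi_{i\delta_{p}}$ and $q^{|\cdot|/2}$. One minor remark: the equality of the fibre cardinalities, $\#\mathscr{S}(n,x)=q^{m-n}$ for $n\geq1$ (and $\#S(o,m)$ when $n=0$), which you flag as the point making one application of convexity suffice, is true but not actually needed --- multiplying the per-fibre Jensen bound by that fibre's own cardinality and summing over fibres collapses the right-hand side to $\sum_{|y|=m}|u(y)|^{r}$ even if the fibres had unequal sizes.
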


\begin{proof}
Consider the measure space $(S(o,m),\mathscr{A}_{m},\#)$, where $S(o,m)$ is the geodesic sphere of radius $m$ around $o$ in $\mathcal{X}$, $\mathscr{A}_{m}$ is the $\sigma$-algebra generated by the set $\{\{x\}:x\in S(o,m)\}$ and $\#$ denotes the counting measure. Our first aim is to prove that $\varepsilon_{n}$ is bounded from $L^{r}(S(o,m),\mathscr{A}_{m},\#)$ to itself, for every $n,m\in\mathbb{Z}_{+}$ and for all $1\leq r\leq\infty$. Fix $n\in\mathbb{Z}_{+}$. If $m\leq n$, $\varepsilon_{n}u(x)=u(x)$ for all $x\in S(o,m)$ and therefore
\begin{equation}\label{mlessthann}
\|\varepsilon_{n}u\|_{L^{r}(S(o,m))}=\|u\|_{L^{r}(S(o,m))},\quad\text{for all }1\leq r\leq\infty.
\end{equation}
We next assume that $m>n$. In such cases, $\varepsilon_{n}$ acts as a projection operator from the space of all $\mathscr{A}_{m}$-measurable functions on $S(o,m)$ onto the space of all $\mathscr{A}_{m,n}$-measurable functions on $S(o,m)$, where $\mathscr{A}_{m,n}$ is a sub $\sigma$-algebra of $\mathscr{A}_{m}$ which is generated by the set $\{\mathscr{S}(n,y):y\in S(o,m)\}$. Moreover, it is easy to see that $\varepsilon_{n}$ satisfies the following properties:
\begin{enumerate}
\item[(a)] $\varepsilon_{n}u=u$, if $u$ is $\mathscr{A}_{m,n}$-measurable on $S(o,m)$.
\item[(b)] For any two $\mathscr{A}_{m}$-measurable functions $u$ and $v$ on $S(o,m)$, we have that
$$\sum\limits_{y\in S(o,m)}\varepsilon_{n}u(y)~v(y)=\sum\limits_{y\in S(o,m)}u(y)~\varepsilon_{n}v(y).$$
\end{enumerate}
Property (a) follows from the definition. To prove Property (b), for each $x_{n}\in S(o,n)$, we define $S^{\prime}(x_{n},m)=\{y\in S(o,m): x_{n}=y_{n}\}$, where $\{o=y_{0},\ldots,y_{n},\ldots,y_{m}=y\}$ is the geodesic ray connecting $o$ to $y$. Now observe that $S(o,m)$ is the disjoint union of the sets $S^{\prime}(x_{n},m)$, $x_{n}\in S(o,n)$ and therefore
$$\sum\limits_{y\in S(o,m)}\varepsilon_{n}u(y)~v(y)=\sum\limits_{x_{n}\in S(o,n)}\left(~\sum\limits_{y\in S^{\prime}(x_{n},m)}\varepsilon_{n}u(y)~v(y)~\right).$$
Since $S^{\prime}(x_{n},m)=\mathscr{S}(n,y)$ for every $y\in S^{\prime}(x_{n},m)$ and $\varepsilon_{n}u$ is constant on $S^{\prime}(x_{n},m)$, we finally have
$$\sum\limits_{y\in S(o,m)}\varepsilon_{n}u(y)~v(y)=\sum\limits_{x_{n}\in S(o,n)}\# \mathscr{S}(n,x)~\varepsilon_{n}u(x)~\varepsilon_{n}v(x),\quad\text{for some }x\in S^{\prime}(x_{n},m).$$
Interchanging the role of $u$ and $v$, we again arrive at the same expression, which establishes our claim.

Using the above properties of $\varepsilon_{n}$, it follows that for all $1\leq r\leq\infty$ and for all $m>n$,
\begin{align}
\|\varepsilon_{n}u\|_{L^{r}(S(o,m))}&=\sup\left\{\left|\sum\limits_{y\in S(o,m)}\varepsilon_{n}u(y)~v(y)\right|:v\text{ is }\mathscr{A}_{m,n}\text{-measurable and }\|v\|_{L^{r^{\prime}}(S(o,m))}\leq 1\right\}\nonumber\\
&=\sup\left\{\left|\sum\limits_{y\in S(o,m)}u(y)~\varepsilon_{n}v(y)\right|:v\text{ is }\mathscr{A}_{m,n}\text{-measurable and }\|v\|_{L^{r^{\prime}}(S(o,m))}\leq 1\right\}\nonumber\\
&=\sup\left\{\left|\sum\limits_{y\in S(o,m)}u(y)~v(y)\right|:v\text{ is }\mathscr{A}_{m,n}\text{-measurable and }\|v\|_{L^{r^{\prime}}(S(o,m))}\leq 1\right\}\nonumber\\
&\leq\sup\left\{\left|\sum\limits_{y\in S(o,m)}u(y)~v(y)\right|:v\text{ is }\mathscr{A}_{m}\text{-measurable and }\|v\|_{L^{r^{\prime}}(S(o,m))}\leq 1\right\}\nonumber\\
&=\|u\|_{L^{r}(S(o,m))}.\label{mgreaterthann}
\end{align}
The desired result now follows by combining (\ref{mlessthann}), (\ref{mgreaterthann}) and (\ref{formula}).
\end{proof}

\begin{Remark}
\textbf{(1)} It follows from (\ref{mlessthann}) and (\ref{mgreaterthann}) that the operators $\varepsilon_{n}$, $n\in\mathbb{Z}_{+}$, are bounded from $L^{1}(S(o,m),\mathscr{A}_{m},\#)$ to itself and their corresponding operator norms are independent of $m$. However, the associated maximal operator $\varepsilon^{\ast}$ defined by
$$\varepsilon^{\ast}u(x)=\sup\limits_{n\in\mathbb{Z}_{+}}|\varepsilon_{n}u(x)|,\quad\text{for all }x\in\mathcal{X},$$
is bounded from $L^{1}(S(o,m),\mathscr{A}_{m},\#)$ to $L^{1,\infty}(S(o,m),\mathscr{A}_{m},\#)$ uniformly in $m$. Indeed, we first observe that for a fixed $m\in\mathbb{Z}_{+}$,
\begin{equation}\label{maximalfunction}
\varepsilon^{\ast}u(x)=\sup\limits_{0\leq n\leq m}|\varepsilon_{n}u(x)|,\quad\text{for all }x\in S(o,m).
\end{equation}
Now, let us assume, without the loss of generality that $u$ and hence all $\varepsilon_{n}u$ are non-negative functions on $S(o,m)$. For $\lambda>0$, let $S_{\lambda,m}=\{x\in S(o,m):\varepsilon^{\ast}u(x)>\lambda\}$. Then $S_{\lambda,m}$ can be decomposed into the disjoint union of the sets $S_{\lambda,m,n}$, $n=0,\ldots,m$, where
$$S_{\lambda,m,n}=\{x\in S(o,m):\varepsilon_{n}u(x)>\lambda\text{ but }\varepsilon_{k}u(x)\leq\lambda\text{ for all }k=0,\ldots,n-1\}.$$
It is fairly easy to see that $S_{\lambda,m,n}$ is a $\mathscr{A}_{m,n}$-measurable set for each $n$. From this fact, expression (\ref{maximalfunction}) and properties (a), (b) of $\varepsilon_{n}$ (see the proof of Lemma \ref{epsilonnboundedness}) we deduce that
\begin{align}
\sum\limits_{x\in S_{\lambda,m}}u(x)=\sum\limits_{n=0}^{m}~\left(\sum\limits_{x\in S(o,m)}u(x)~\varepsilon_{n}(\chi_{S_{\lambda,m,n}})(x)\right)&=\sum\limits_{n=0}^{m}~\left(\sum\limits_{x\in S(o,m)}\varepsilon_{n}u(x)~\chi_{S_{\lambda,m,n}}(x)\right)\nonumber\\
&=\sum\limits_{n=0}^{m}~\left(\sum\limits_{x\in S_{\lambda,m,n}}\varepsilon_{n}u(x)\right)\geq \lambda~(\# S_{\lambda,m}),\label{maximalweaktype}
\end{align}
where $\chi_{S_{\lambda,m,n}}(\cdot)$ denotes the indicator function of the set $S_{\lambda,m,n}$. From (\ref{maximalweaktype}), we finally deduce that $\varepsilon^{\ast}$ is bounded from $L^{1}(S(o,m),\mathscr{A}_{m},\#)$ to $L^{1,\infty}(S(o,m),\mathscr{A}_{m},\#)$ and the operator norm is independent of $m$, as claimed.

\noindent\textbf{(2)} From the preceding remark, one can actually prove that the operator $\varepsilon^{\ast}$ is of weak type $(1,1)$ on $L^{1}(\mathcal{X})$ and of strong type $(p,p)$ on $L^{p}(\mathcal{X})$ for all $1<p\leq\infty$. To see this, let us consider the set $S_{\lambda}=\{x\in\mathcal{X}:\varepsilon^{\ast}u(x)>\lambda\}$, where $\lambda>0$. Then estimate (\ref{maximalweaktype}) implies that
$$\lambda~(\# S_{\lambda})=\sum\limits_{m=0}^{\infty} \lambda~(\# S_{\lambda,m})\leq \|u\|_{L^{1}(\mathfrak{X})}.$$
The $L^{p}$-boundedness now follows by interpolating the weak type $(1,1)$-boundedness and the trivial $L^{\infty}$-boundedness of $\varepsilon^{\ast}$. Therefore, in comparison to the previously obtained results in \cite[Page 733]{KR}, here we get an improvement in the boundedness properties of $\varepsilon^{\ast}$.
\end{Remark}

Since we have developed all the necessary ingredients, we are now ready to conclude the proof of Theorem \ref{hardypcase}.

\noindent\textit{Proof of Theorem \ref{hardypcase}.} Assume that $u=\mathcal{P}_{z}F$ for some $F\in L^{r}(\Omega)$, where $r$ and $z$ are as in the hypothesis of this theorem. Since, for a fixed $\omega$, the function $x\mapsto p^{1/2+iz}(x,\omega)$ is an eigenfunction of $\mathcal{L}$ with the eigenvalue $\gamma(z)$, we have $\mathcal{L}u=\gamma(z)u$. Furthermore, using Proposition \ref{hardyifpart} it follows that $u\in\mathcal{H}^{r}_{p}(\mathcal{X})$ and there exists a constant $C_{p}>0$ such that
$$C_{p}\|F\|_{L^{r}(\Omega)}\leq\|\mathcal{P}_{z}F\|_{\mathcal{H}^{r}_{p}(\mathcal{X})}\leq\|F\|_{L^{r}(\Omega)},\quad\text{for all }F\in L^{r}(\Omega),\text{ for all }1\leq r\leq \infty.$$
Furthermore, replacing the function $F$ by a complex measure $\mu$ on $\Omega$, a similar computation as in Proposition \ref{hardyifpart} yields
$$\|\mathcal{P}_{z}\mu\|_{\mathcal{H}^{1}_{p}(\mathcal{X})}\leq|\mu|(\Omega), \text{ where }|\mu|\text{ denotes the total variation of }\mu\text{ on }\Omega.$$

Conversely, let $\mathcal{L}u=\gamma(z)u$ for some $z=\alpha+i\delta_{p^{\prime}}$, where $1\leq p\leq 2$ and $\alpha$ satisfies the hypothesis of this theorem. By Theorem \ref{MZ2}, there exists a martingale $\mathbf{F}=\{F_{n}\}_{n\in\mathbb{Z}_{+}}$ on the boundary $\Omega$ such that
$$u(x)=\mathcal{P}_{z}\mathbf{F}(x)=\lim\limits_{n\rightarrow\infty}\int\limits_{\Omega}p^{1/2+iz}(x,\omega)F_{n}(\omega)~d\nu(\omega).$$ 
Since $F_{n}\in\mathcal{K}(\Omega)\subset L^{r}(\Omega)$ for all $1\leq r\leq\infty$, using Proposition \ref{hardyifpart}, there exists a constant $C_{p}>0$, depending only on $p$, such that
$$\|F_{n}\|_{L^{r}(\Omega)}\leq C_{p}\|\mathcal{P}_{z}F_{n}\|_{\mathcal{H}^{r}_{p}(\mathcal{X})},\quad\text{for every }n\in\mathbb{Z}_{+}.$$
Now, plugging in the formula $\mathcal{P}_{z}F_{n}=\varepsilon_{n}(\mathcal{P}_{z}\mathbf{F})$ (see \cite[Lemma 3.3]{MZ}) above, we further get
$$\|F_{n}\|_{L^{r}(\Omega)}\leq C_{p}\|\varepsilon_{n}u\|_{\mathcal{H}^{r}_{p}(\mathcal{X})},\quad\text{for every }n\in\mathbb{Z}_{+},$$
where $\varepsilon_{n}$ is as in (\ref{epsilonn}). The hypothesis $u\in\mathcal{H}^{r}_{p}(\mathcal{X})$, Lemma \ref{epsilonnboundedness} and  the expression above altogether implies that
$$\sup\limits_{n\in\mathbb{Z}_{+}}\|F_{n}\|_{L^{r}(\Omega)}\leq C_{p}\|u\|_{\mathcal{H}^{r}_{p}(\mathcal{X})}<\infty.$$
Finally, using Proposition \ref{martingalelpfunction}, we get our desired result. \qed

\section{Characterization of \texorpdfstring{$\mathcal{H}^{r}_{\ast}(\mathcal{X})$}{2case}-eigenfunctions of \texorpdfstring{$\mathcal{L}$}{laplacian}}\label{characterization2case}
To obtain an analogue of the radial convergence result, that is Proposition \ref{radialconvergence}, for all $z\in\mathbb{R}\setminus(\tau/2)\mathbb{Z}$ and to prove Theorem \ref{hardy2case}, we will use the boundedness properties of the martingale maximal function and a martingale version of the Littlewood-Paley inequalities. We state these results in the form of lemmas and refer to \cite[Chapter IV]{ST} for the proofs.

\begin{Lemma}[{{\cite[Chapter IV, Theorem 6]{ST}}}]\label{martingalemaximalfunction}
For $F\in\mathcal{K}(\Omega)$ and $n\in\mathbb{N}$, let $\mathcal{E}_{n}(F)$ be as in (\ref{conditionalexpectations}). Define the associated maximal operator by the formula
$$\mathcal{E}^{\ast}(F)(\omega)=\sup\limits_{n\in\mathbb{N}}|\mathcal{E}_{n}(F)(\omega)|,\quad\text{for all }F\in\mathcal{K}(\Omega).$$
Then for every $1<r\leq\infty$, there exists a positive constant $C_{r}$ depending only on $r$ such that
$$\|\mathcal{E}^{\ast}(F)\|_{L^{r}(\Omega)}\leq C_{r}\|F\|_{L^{r}(\Omega)},\quad\text{for all }F\in L^{r}(\Omega).$$
\end{Lemma}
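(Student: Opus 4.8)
\noindent\textit{Proof proposal.} This is the classical Doob maximal inequality specialized to the $q$-adic martingale structure on $(\Omega,\mathcal{M},\nu)$, in which each $\sigma$-algebra $\mathcal{M}_{n}$ has finitely many atoms, namely the sets $E(x)$ with $x\in B(o,n)$, and $\mathcal{E}_{n}(F)$ is the function that is constant on each such atom and equals the $\nu$-average of $F$ over it, as in (\ref{conditionalexpectations}). Since the maximal operator is initially defined on the cylindrical functions, the plan is to prove the bound first for $F\in\mathcal{K}(\Omega)$ and then pass to arbitrary $F\in L^{r}(\Omega)$ by density. The point of starting with $\mathcal{K}(\Omega)$ is that if $F\in\mathcal{K}_{n_{0}}(\Omega)$ then $\mathcal{E}_{n}(F)=F$ for all $n\geq n_{0}$ by (\ref{knomegaformula}), so $\mathcal{E}^{\ast}(F)=\max_{0\leq n\leq n_{0}}|\mathcal{E}_{n}(F)|$ is a genuine finite maximum of bounded functions, guaranteeing a priori that $\|\mathcal{E}^{\ast}(F)\|_{L^{r}(\Omega)}<\infty$; this finiteness is what makes the self-improving estimate below legitimate.

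The heart of the matter is the weak-type $(1,1)$ estimate for $F\in\mathcal{K}(\Omega)$. Fix $\lambda>0$ and define the stopping time $N(\omega)=\min\{n:|\mathcal{E}_{n}(F)(\omega)|>\lambda\}$, with $N(\omega)=\infty$ if no such $n$ exists. Because each $\mathcal{E}_{n}(F)$ is $\mathcal{M}_{n}$-measurable and $\{\mathcal{M}_{n}\}$ is increasing, each set $A_{n}:=\{N=n\}$ is $\mathcal{M}_{n}$-measurable, i.e.\ a finite union of atoms $E(x_{n})$; the $A_{n}$ are pairwise disjoint and $\{\mathcal{E}^{\ast}(F)>\lambda\}=\bigsqcup_{n}A_{n}$. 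On $A_{n}$ one has $|\mathcal{E}_{n}(F)|>\lambda$, so using the contraction property $\int_{A}|\mathcal{E}_{n}(F)|\,d\nu\leq\int_{A}\mathcal{E}_{n}(|F|)\,d\nu=\int_{A}|F|\,d\nu$, valid for $A\in\mathcal{M}_{n}$ (each side is just a sum of atom-averages),
$$\lambda\,\nu(A_{n})\leq\int_{A_{n}}|\mathcal{E}_{n}(F)|\,d\nu\leq\int_{A_{n}}|F|\,d\nu.$$
Summing over $n$ gives the refined Doob inequality $\lambda\,\nu(\{\mathcal{E}^{\ast}(F)>\lambda\})\leq\int_{\{\mathcal{E}^{\ast}(F)>\lambda\}}|F|\,d\nu\leq\|F\|_{L^{1}(\Omega)}$.

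The remaining steps are routine. The strong-type $(\infty,\infty)$ bound is immediate since each $\mathcal{E}_{n}$ is an averaging operator, hence $\|\mathcal{E}^{\ast}(F)\|_{L^{\infty}}\leq\|F\|_{L^{\infty}}$; this settles $r=\infty$. For $1<r<\infty$ one may interpolate the weak $(1,1)$ and strong $(\infty,\infty)$ bounds via the Marcinkiewicz theorem to obtain $\|\mathcal{E}^{\ast}(F)\|_{L^{r}(\Omega)}\leq C_{r}\|F\|_{L^{r}(\Omega)}$ for $F\in\mathcal{K}(\Omega)$, with $C_{r}$ depending only on $r$; alternatively, combining the refined inequality above with the layer-cake formula, Fubini's theorem, and Hölder's inequality yields the sharp constant $C_{r}=r/(r-1)$ (this is where the a priori finiteness of $\|\mathcal{E}^{\ast}(F)\|_{L^{r}}$ is used). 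A density argument, together with the fact that $F\mapsto\mathcal{E}_{n}(F)$ is an $L^{r}$-contraction for every $n$, then extends the bound from $\mathcal{K}(\Omega)$ to all of $L^{r}(\Omega)$. There is no substantive obstacle; the only steps demanding care are the measurability of the stopping-time level sets $\{N=n\}$ and the reduction to cylindrical functions that supplies the needed a priori finiteness.
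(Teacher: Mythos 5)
Your proof is correct: it is the classical Doob stopping-time argument (weak type $(1,1)$ for the maximal function, the trivial $L^{\infty}$ bound, and interpolation), which is precisely the standard proof of the martingale maximal theorem that the paper does not reproduce but simply cites from \cite[Chapter IV, Theorem 6]{ST}. The only remark worth making is that your reduction to cylindrical functions and the closing density step are dispensable: the stopping-time sets $\{N=n\}$ are $\mathcal{M}_{n}$-measurable for arbitrary $F\in L^{1}(\Omega)$, so the weak $(1,1)$ estimate and Marcinkiewicz interpolation already give the bound on all of $L^{r}(\Omega)$ directly (alternatively, run the sharp-constant argument on the truncated maxima $\max_{0\leq n\leq N}|\mathcal{E}_{n}(F)|$, which are automatically in $L^{r}$, and let $N\to\infty$ by monotone convergence).
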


\begin{Lemma}[{{\cite[Chapter IV, Theorem 7]{ST}}}]\label{differenceoperatorboundedness}
For $F\in\mathcal{K}(\Omega)$ and $j\in\mathbb{N}$, let $\Delta_{j}(F)$ be as in (\ref{differenceoperators}). Suppose that $\mathbf{a}=\{a_{j}\}_{j\in\mathbb{N}}$ is any sequence of numbers such that $|a_{j}|\leq 1$ for all $j\in\mathbb{N}$. Set
$$T_{\mathbf{a}}(F)(\omega)=\sum\limits_{j=1}^{\infty}a_{j}~\Delta_{j}(F)(\omega),\quad\text{for all }F\in\mathcal{K}(\Omega).$$
Then for every $1<r<\infty$, there exists a positive constant $C_{r}$ independent of $\mathbf{a}$ such that
$$\|T_{\mathbf{a}}(F)\|_{L^{r}(\Omega)}\leq C_{r}\|F\|_{L^{r}(\Omega)},\quad\text{for all }F\in L^{r}(\Omega).$$
\end{Lemma}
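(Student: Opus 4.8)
The statement is the martingale transform (Burkholder) inequality for the filtration $\{\mathcal{M}_{n}\}_{n\geq 0}$ on the probability space $(\Omega,\mathcal{M},\nu)$, and the plan is to derive it from the two-sided $L^{r}$-equivalence between a mean-zero martingale and its square function. Write $\widehat{F}=\mathcal{E}_{0}(F)$ (which for our filtration is just the constant $\int_{\Omega}F\,d\nu$) and, for $F\in\mathcal{K}(\Omega)$, set
$$S(F)(\omega)=\Bigl(\sum_{j\geq 1}|\Delta_{j}(F)(\omega)|^{2}\Bigr)^{1/2}.$$
Since $|a_{j}|\leq 1$ for every $j$ we have the pointwise domination $S(T_{\mathbf{a}}F)\leq S(F)$, and since $\mathcal{E}_{0}(\Delta_{j}(F))=0$ for every $j\geq 1$ the function $T_{\mathbf{a}}F$ is itself mean zero. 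Hence it suffices to prove, for every $1<r<\infty$ and every $F\in\mathcal{K}(\Omega)$, the inequality
$$\|S(F)\|_{L^{r}(\Omega)}\leq C_{r}\,\|F\|_{L^{r}(\Omega)}$$
together with its dual
$$\|F-\widehat{F}\|_{L^{r}(\Omega)}\leq C_{r}\,\|S(F)\|_{L^{r}(\Omega)},$$
with $C_{r}$ depending only on $r$: applying the second to $T_{\mathbf{a}}F$, then the domination, then the first to $F$ gives $\|T_{\mathbf{a}}F\|_{r}\leq C_{r}\|S(T_{\mathbf{a}}F)\|_{r}\leq C_{r}\|S(F)\|_{r}\leq C_{r}\|F\|_{r}$, with a constant that does not depend on $\mathbf{a}$; the passage from $\mathcal{K}(\Omega)$ to $L^{r}(\Omega)$ is then by density.

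For the base case $r=2$ I would use orthogonality: the $\Delta_{j}(F)$, $j\geq 1$, are pairwise orthogonal in $L^{2}(\Omega)$ because $\Delta_{j}(F)$ is $\mathcal{M}_{j}$-measurable with $\mathcal{E}_{j-1}(\Delta_{j}(F))=0$, so $\|S(F)\|_{2}^{2}=\sum_{j\geq 1}\|\Delta_{j}(F)\|_{2}^{2}=\|F-\widehat{F}\|_{2}^{2}$ and both displayed inequalities hold with constant $1$. For the first inequality at a general exponent I would run the Burkholder--Gundy good-$\lambda$ argument. The exceptional sets here are especially easy to handle because the atoms of $\mathcal{M}_{n}$ are exactly the sectors $E(x)$ with $|x|\leq n$: taking the stopping time to be the first index at which a partial square function exceeds the threshold, one obtains, for all small $\gamma>0$,
$$\nu\bigl(\{S(F)>2\lambda,\ \mathcal{E}^{\ast}(F)\leq\gamma\lambda\}\bigr)\leq C\gamma^{2}\,\nu\bigl(\{S(F)>\lambda\}\bigr),$$
where $\mathcal{E}^{\ast}$ is the martingale maximal operator of Lemma \ref{martingalemaximalfunction}. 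Multiplying by $r\lambda^{r-1}$, integrating, using that $S(F)$ is bounded (hence a priori in $L^{r}$) for $F\in\mathcal{K}(\Omega)$ so that the resulting $\|S(F)\|_{r}$-term may be absorbed, choosing $\gamma$ small, and invoking Lemma \ref{martingalemaximalfunction} yields $\|S(F)\|_{r}\leq C_{r}\|\mathcal{E}^{\ast}(F)\|_{r}\leq C_{r}\|F\|_{r}$ for every $1<r<\infty$.

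The reverse inequality then follows by duality from the first one applied at the conjugate exponent: for $F,G\in\mathcal{K}(\Omega)$, orthogonality of martingale differences gives $\int_{\Omega}(F-\widehat{F})\,\overline{G}\,d\nu=\sum_{j\geq 1}\int_{\Omega}\Delta_{j}(F)\,\overline{\Delta_{j}(G)}\,d\nu$, which by the pointwise Cauchy--Schwarz inequality and H\"{o}lder's inequality is bounded by $\int_{\Omega}S(F)\,S(G)\,d\nu\leq\|S(F)\|_{r}\,\|S(G)\|_{r^{\prime}}\leq C_{r^{\prime}}\|S(F)\|_{r}\,\|G\|_{r^{\prime}}$; taking the supremum over $\|G\|_{r^{\prime}}\leq 1$ gives $\|F-\widehat{F}\|_{r}\leq C_{r^{\prime}}\|S(F)\|_{r}$, as required. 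I expect the good-$\lambda$ inequality to be the one genuine obstacle: one must choose the stopping time correctly, check that the relevant level sets are unions of atoms $E(x)$ so that the conditional-expectation identities (\ref{conditionalexpectations})--(\ref{knomegaformula}) can be applied atom by atom, and carry out the $L^{2}$-type comparison of the stopped square function with the maximal function there — this is precisely the point at which the concrete martingale structure of the tree boundary, rather than any soft argument, is used.
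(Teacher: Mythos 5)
Your argument is correct, but it is worth noting that the paper does not prove this lemma at all: it is quoted verbatim from Stein \cite[Chapter IV, Theorem 7]{ST}, and the text explicitly defers the proof to that reference. What you have written is essentially a self-contained proof of that classical martingale-transform (Burkholder) inequality via the square function $S(F)=\bigl(\sum_{j\geq1}|\Delta_{j}(F)|^{2}\bigr)^{1/2}$: the reduction $\|T_{\mathbf{a}}F\|_{r}\leq C_{r}\|S(T_{\mathbf{a}}F)\|_{r}\leq C_{r}\|S(F)\|_{r}\leq C_{r}\|F\|_{r}$ is sound, provided you make explicit the identity $\Delta_{j}(T_{\mathbf{a}}F)=a_{j}\Delta_{j}(F)$ (immediate from $\mathcal{E}_{k}\Delta_{j}(F)=\Delta_{j}(F)$ for $j\leq k$ and $=0$ for $j>k$, the sum being finite for $F\in\mathcal{K}(\Omega)$), on which the pointwise domination $S(T_{\mathbf{a}}F)\leq S(F)$ rests. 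The $r=2$ orthogonality step, the duality argument for $\|F-\mathcal{E}_{0}F\|_{r}\leq C_{r'}\|S(F)\|_{r}$ (using density of $\mathcal{K}(\Omega)$ in $L^{r'}(\Omega)$ and that the mean-zero part of $G$ is all that contributes), and the final density extension to $L^{r}(\Omega)$ are all fine; the one genuinely substantial ingredient is, as you say, the good-$\lambda$ inequality between $S(F)$ and the maximal function, which holds for arbitrary discrete-time martingales (Burkholder--Davis--Gundy) and combines with Lemma \ref{martingalemaximalfunction} exactly as you describe, the a priori finiteness needed for absorption being clear since $S(F)$ is bounded for cylindrical $F$ and $\nu$ is a probability measure. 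So your route buys a self-contained proof where the paper simply outsources the result; the trade-off is that you must carry out (or in turn cite) the good-$\lambda$ machinery, which is precisely the content of the quoted theorem in \cite{ST}.
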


From the explicit expression (\ref{phiz}), it is clear that for $z\in\mathbb{R}\setminus(\tau/2)\mathbb{Z}$, the spherical functions $\phi_{z}$ oscillates, and hence one cannot expect to get a radial convergence result of the normalized Poisson integral $\phi_{z}(|x|)^{-1}\mathcal{P}_{z}F(x)$, as in Proposition \ref{radialconvergence}. Therefore to avoid the zeros of $\phi_{z}$, we instead consider the normalized form $q^{|x|/2}\mathcal{P}_{z}F(x)$, whenever $z\in\mathbb{R}\setminus(\tau/2)\mathbb{Z}$. Using (\ref{phiz}), it is also evident that $q^{|x|/2}\mathcal{P}_{z}\mathbf{1}(x)$ satisfies the following convergence:
$$\lim\limits_{n\rightarrow\infty}\left(\frac{1}{n}\sum\limits_{l=0}^{n}\left(q^{l/2}\phi_{z}(l)\right)~\left(q^{l/2}\overline{\phi_{z}(l)}\right)\right)=2|\mathbf{c}(z)|^{2}.$$
On the other hand, if we substitute the facts $\#S(o,l)\asymp q^{l}$ and $\overline{\phi_{z}(l)}=\phi_{-z}(l)$ inside the quantity in the left hand side of the expression above and simplify it further using (\ref{formula}), we get
$$\frac{1}{n}\sum\limits_{l=0}^{n}q^{l}\phi_{z}(l)\overline{\phi_{z}(l)}\asymp \frac{1}{n}\sum\limits_{l=0}^{n}\#S(o,l)\phi_{z}(l)\phi_{-z}(l)=\frac{1}{n}\sum\limits_{x\in B(o,n)}p^{1/2-iz}(x,\omega)\mathcal{P}_{z}\mathbf{1}(x).$$
Considering this as our first line of motivation, we will now prove the following convergence result:

\begin{Proposition}\label{inversionpoisson}
Let $z\in\mathbb{R}\setminus(\tau/2)\mathbb{Z}$. If $1<r<\infty$ and if $F\in L^{r}(\Omega)$, then
$$\frac{q}{2(q+1)|\mathbf{c}(z)|^{2}}~\lim\limits_{n\rightarrow\infty}\left(\frac{1}{n}\sum\limits_{x\in B(o,n)}p^{1/2-iz}(x,\omega)\mathcal{P}_{z}F(x)\right)=F(\omega),\quad\text{in }L^{r}(\Omega)\text{-norm}.$$
\end{Proposition}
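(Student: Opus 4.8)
The plan is to first establish the identity for cylindrical functions $F \in \mathcal{K}(\Omega)$ by an explicit computation, and then extend to $L^r(\Omega)$ by a density-plus-uniform-bounds argument. For the cylindrical case, fix $F \in \mathcal{K}_m(\Omega)$. Using the decomposition (\ref{poissonfull}) of the Poisson kernel together with (\ref{formula}) to convert the sum over $B(o,n)$ into an average over $K$, one rewrites the average $\frac{1}{n}\sum_{x \in B(o,n)} p^{1/2-iz}(x,\omega)\,\mathcal{P}_z F(x)$ in terms of the spherical function $\phi_z$ and the conditional expectations $\mathcal{E}_l(F)$: grouping the sphere $S(o,l)$ and using that $\mathcal{P}_z F$ restricted there essentially pairs $F$ against $p^{1/2+iz}$, one expects a main term of the shape $\frac{1}{n}\sum_{l=0}^{n} \bigl(q^{l/2}\phi_z(l)\bigr)\bigl(q^{l/2}\overline{\phi_z(l)}\bigr)\,\mathcal{E}_l(F)(\omega)$ plus lower-order terms. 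Since $F \in \mathcal{K}_m(\Omega)$ we have $\mathcal{E}_l(F) = F$ for all $l \ge m$, and the motivating computation already quoted in the text shows $\frac{1}{n}\sum_{l=0}^n \bigl(q^{l/2}\phi_z(l)\bigr)\bigl(q^{l/2}\overline{\phi_z(l)}\bigr) \to 2|\mathbf{c}(z)|^2$. Tracking the constant $\#S(o,l) = (q+1)q^{l-1}$ exactly (rather than just the asymptotic $\asymp q^l$) produces the precise normalizing factor $\frac{q}{2(q+1)|\mathbf{c}(z)|^2}$, so the limit is $F(\omega)$ pointwise, and since $F$ takes finitely many values the convergence is also in $L^r(\Omega)$.

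For the extension to general $F \in L^r(\Omega)$, I would show that the operators
$$
T_n F(\omega) = \frac{q}{2(q+1)|\mathbf{c}(z)|^2}\cdot\frac{1}{n}\sum_{x \in B(o,n)} p^{1/2-iz}(x,\omega)\,\mathcal{P}_z F(x)
$$
are bounded on $L^r(\Omega)$ uniformly in $n$, for $1 < r < \infty$. Granting this, density of $\mathcal{K}(\Omega)$ in $L^r(\Omega)$ and the cylindrical case give $T_n F \to F$ in $L^r(\Omega)$ for all $F \in L^r(\Omega)$ by the standard $\varepsilon/3$ argument. To prove the uniform bound, I would re-express $T_n F$ purely in terms of objects on $\Omega$: unwinding the definitions as in the cylindrical computation, $T_n F$ should equal a convex-type average $\frac{1}{n}\sum_{l=0}^n c_l\, \mathcal{E}_l(F)$ (up to a remainder controlled similarly), where the coefficients $c_l = \frac{q}{2(q+1)|\mathbf{c}(z)|^2}\bigl(q^{l/2}\phi_z(l)\bigr)\bigl(q^{l/2}\overline{\phi_z(l)}\bigr)$ are bounded uniformly in $l$ because of the pointwise estimate $|\phi_z(l)| \le 2|\mathbf{c}(z)|\,q^{-l/2}$ quoted after (\ref{harishchandraproperties}). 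Writing $\mathcal{E}_l(F) = \sum_{j=0}^l \Delta_j(F)$ and interchanging sums, $T_n F$ becomes a weighted sum $\sum_{j} a_{j,n}\,\Delta_j(F)$ with $\sup_{j,n}|a_{j,n}| < \infty$; then Lemma \ref{differenceoperatorboundedness} (the martingale Littlewood--Paley / multiplier inequality) yields $\|T_n F\|_{L^r(\Omega)} \le C_r \|F\|_{L^r(\Omega)}$ with $C_r$ independent of $n$. Alternatively, the maximal estimate in Lemma \ref{martingalemaximalfunction} handles the $\frac{1}{n}\sum_l \mathcal{E}_l(F)$ form directly as a Cesàro average of a maximally-dominated sequence.

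The main obstacle is the bookkeeping in the first step: one must carefully justify that $\frac{1}{n}\sum_{x \in B(o,n)} p^{1/2-iz}(x,\omega)\,\mathcal{P}_z F(x)$ genuinely reduces, up to an error term that vanishes in the Cesàro average, to the clean expression $\frac{1}{n}\sum_{l=0}^n c_l\,\mathcal{E}_l(F)(\omega)$. The delicate point is that $\mathcal{P}_z F(x)$ for $x \in S(o,l)$ is not simply a multiple of $\phi_z(l)$ times a conditional expectation but involves the full sum (\ref{poissonfull}) over the sectors $E_j(x) \setminus E_{j+1}(x)$; one must sum $p^{1/2-iz}(x,\omega)\,p^{1/2+iz}(x,\omega')$ over $x \in S(o,l)$, identify the diagonal contribution (where $c(x,\omega)$ and $c(x,\omega')$ lie on the same sector, giving the $|\phi_z(l)|^2$-type term and forcing $\omega' \in E(x_l)$, i.e. producing $\mathcal{E}_l F$) and bound the off-diagonal contribution. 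Controlling this off-diagonal remainder uniformly — showing its $L^r$-norm is $o(n)$ after the averaging, again via Lemmas \ref{martingalemaximalfunction} and \ref{differenceoperatorboundedness} — is where the real work lies; once that estimate is in hand, the rest is the routine density argument.
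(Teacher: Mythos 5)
Your high-level architecture coincides with the paper's (prove the statement on the dense class $\mathcal{K}(\Omega)$, establish $L^{r}$-bounds for $T_{n}$ uniform in $n$ via the martingale multiplier Lemma \ref{differenceoperatorboundedness}, then conclude by the standard $\epsilon/3$ density argument), but the core of the proof is missing. You reduce everything to the claim that $\frac{1}{n}\sum_{x\in B(o,n)}p^{1/2-iz}(x,\omega)\mathcal{P}_{z}F(x)$ equals a clean average $\frac{1}{n}\sum_{l\leq n}c_{l}\,\mathcal{E}_{l}(F)(\omega)$ with $c_{l}\asymp q^{l}|\phi_{z}(l)|^{2}$ plus an ``off-diagonal'' remainder, and you yourself flag the control of that remainder as ``where the real work lies'' without carrying it out. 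That is precisely the step that cannot be waved through: for non-constant $F$, $\mathcal{P}_{z}F$ on $S(o,l)$ is not governed by $\phi_{z}(l)$ and $\mathcal{E}_{l}(F)$ alone, and the cross-sector terms are not a lower-order error in any obvious sense — they carry the same size as the ``diagonal'' part and only cancel because of an exact algebraic identity. So as written the proposal establishes neither the cylindrical case nor the uniform bound; both rest on an unproven reduction whose guessed form ($\mathcal{E}_{l}$-expansion with coefficients $q^{l}|\phi_{z}(l)|^{2}$) is not what the computation actually produces.

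The missing idea in the paper is to bypass any diagonal/off-diagonal splitting altogether: expand $F\in\mathcal{K}_{m}(\Omega)$ into martingale differences via (\ref{knomegaformula}) and use the Mantero--Zappa closed formula (\ref{poissondeltaj})--(\ref{bjmz}), $\mathcal{P}_{z}(\Delta_{j}F)(k\cdot\omega_{l})=q^{-l/2}B(j,l,z)\,\Delta_{j}(F)(k\cdot\omega_{o})$ for $l\geq j$, together with the symmetry $p^{1/2-iz}(k\cdot\omega_{l},k_{1}\cdot\omega_{o})=p^{1/2-iz}(k_{1}\cdot\omega_{l},k\cdot\omega_{o})$, which lets the inner $K$-integral be recognized as $\mathcal{P}_{-z}(\Delta_{j}F)$ and the formula be applied a second time. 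This yields the \emph{exact} identity (\ref{gnfinal}), $T_{n}F=\sum_{j\leq\min\{m,n\}}\bigl(\frac{1}{n}\sum_{l=j}^{n}\#S(o,l)q^{-l}|B(j,l,z)|^{2}\bigr)\Delta_{j}(F)$, with no remainder; the explicit coefficients (\ref{finalcoefficients}) then give both the $O(1/n)$ convergence on $\mathcal{K}_{m}(\Omega)$ and the uniform bound $\|T_{n}F\|_{L^{r}(\Omega)}\leq C_{z,r}\|F\|_{L^{r}(\Omega)}$ after one application of Lemma \ref{differenceoperatorboundedness}. Unless you either reproduce this exact expansion or genuinely estimate your off-diagonal kernel $\sum_{x\in S(o,l)}p^{1/2-iz}(x,\omega)p^{1/2+iz}(x,\omega')$ (a substantial computation you have not begun), the proof is incomplete.
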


\begin{proof}
Let us assume that $z\in\mathbb{R}\setminus(\tau/2)\mathbb{Z}$. Our first aim is to prove our assertion for all $F\in\mathcal{K}(\Omega)$. For this purpose, we define a one parameter family of operators $\{T_{n}\}_{n\in\mathbb{N}}$ by the formula
\begin{equation}\label{partialsum2case}
T_{n}F(\omega)=\frac{1}{n}\sum\limits_{x\in B(o,n)}p^{1/2-iz}(x,\omega)~\mathcal{P}_{z}F(x),\quad\text{for all }\omega\in\Omega,\text{ for all }F\in\mathcal{K}(\Omega).
\end{equation}
Now fix $F\in\mathcal{K}(\Omega)$. By definition, $F\in\mathcal{K}(\Omega)$ implies that $F\in\mathcal{K}_{m}(\Omega)$ for some $m\in\mathbb{Z}_{+}$. Therefore plugging in the formula (\ref{knomegaformula}) into (\ref{partialsum2case}) and simplifying the expression further using (\ref{formula}), we obtain
\begin{equation}\label{inversion2}
T_{n}F(\omega)=\frac{1}{n}~\sum\limits_{l=0}^{n}\# S(o,l)\left(~\sum\limits_{j=0}^{m}~\left(~\int\limits_{K}p^{1/2-iz}(k\cdot\omega_{l},\omega)~\mathcal{P}_{z}(\Delta_{j}(F))(k\cdot\omega_{l})~dk~\right)~\right).
\end{equation}

Our next aim is to explicitly determine $T_{n}F(\omega)$ appearing in the formula above. For this purpose, we shall use the following explicit formula of $\mathcal{P}_{z}(\Delta_{j}(F))(k\cdot\omega_{l})$ from \cite[Page 377]{MZ}:
\begin{equation}\label{poissondeltaj}
\mathcal{P}_{z}(\Delta_{j}(F))(k\cdot\omega_{l})=\begin{cases}
0&\mbox{if }l<j,\\
q^{-l/2}B(j,l,z)~\Delta_{j}(F)(k\cdot\omega_{o})&\mbox{if }l\geq j,
\end{cases}
\end{equation}
where $B(j,l,z)$ is defined by the rule
\begin{equation}\label{bjmz}
B(j,l,z)=\begin{cases}
{\bf c}(z)q^{izl}+{\bf c}(-z)q^{-izl}&\mbox{if }j=0,\\
{\bf c}(z)q^{iz(j-1)}(q^{iz(l-j+1)}-q^{-iz(l-j+1)})&\mbox{if }j>0.
\end{cases}
\end{equation}
Substituting the value of $\mathcal{P}_{z}(\Delta_{j}(F))(k\cdot\omega_{l})$ from (\ref{poissondeltaj}) into (\ref{inversion2}), for all $n\geq m+1$, we have
\begin{align}
T_{n}F(\omega)&=\frac{1}{n}~\sum\limits_{l=0}^{m}\# S(o,l)q^{-l/2}\left(~\sum\limits_{j=0}^{l}B(j,l,z)\left(~\int\limits_{K}p^{1/2-iz}(k\cdot\omega_{l},\omega)~\Delta_{j}(F)(k\cdot\omega_{o})~dk~\right)~\right)\nonumber\\
&+\frac{1}{n}~\sum\limits_{l=m+1}^{n}\# S(o,l)q^{-l/2}\left(~\sum\limits_{j=0}^{m}B(j,l,z)\left(~\int\limits_{K}p^{1/2-iz}(k\cdot\omega_{l},\omega)~\Delta_{j}(F)(k\cdot\omega_{o})~dk~\right)~\right).\label{gnomega1}
\end{align}

Since $k\cdot o=o$, using (\ref{poissonoriginal}) we have $p^{1/2-iz}(k\cdot o,k_{1}\cdot\omega_{o})=1$ for all $k\in K$ and for a fixed $k_{1}\in K$. This in turn implies that
\begin{equation}\label{poisson1}
\int\limits_{K}p^{1/2-iz}(k\cdot\omega_{l},k_{1}\cdot\omega_{o})~\Delta_{j}(F)(k\cdot\omega_{o})~dk=\mathcal{P}_{-z}(\Delta_{j}(F))(k_{1}\cdot\omega_{l}),\quad\text{whenever }l=0.
\end{equation}
We next compute the integral in (\ref{poisson1}) for all $l\in\mathbb{N}$. By definition, for a fixed $k_{1}\in K$ and $l\in\mathbb{N}$, the function $k\mapsto |c(k\cdot\omega_{l},k_{1}\cdot\omega_{o})|$ takes only finitely many values. Implementing this fact in (\ref{poissonoriginal}), we get a decomposition of the form
\begin{multline*}
p^{1/2-iz}(k\cdot\omega_{l},k_{1}\cdot\omega_{o})=\sum\limits_{d=0}^{l-1}q^{(1/2-iz)(2d-l)}\chi_{\{k\in K:k\cdot\omega_{d}=k_{1}\cdot\omega_{d}\}\setminus\{k\in K:k\cdot\omega_{d+1}=k_{1}\cdot\omega_{d+1}\}}(k)\\
+q^{(1/2-iz)l}\chi_{\{k\in K:k\cdot\omega_{l}=k_{1}\cdot\omega_{l}\}}(k).
\end{multline*}
Now observe that for each $d=0,\ldots,l$, the set $\{k\in K:k\cdot\omega_{d}=k_{1}\cdot\omega_{d}\}$ corresponds to the sector $E_{d}(k_{1}\cdot\omega_{l})$ defined by (\ref{sectors}), via the group action $k\mapsto k\cdot\omega_{o}$. Keeping this in mind and comparing the decomposition above with the explicit expression (\ref{poissonfull}) of the Poisson kernel, it follows that $p^{1/2-iz}(k\cdot\omega_{l},k_{1}\cdot\omega_{o})=p^{1/2-iz}(k_{1}\cdot\omega_{l},k\cdot\omega_{o})$. Consequently,
\begin{equation}\label{poisson2}
\int\limits_{K}p^{1/2-iz}(k\cdot\omega_{l},k_{1}\cdot\omega_{o})~\Delta_{j}(F)(k\cdot\omega_{o})~dk=\mathcal{P}_{-z}(\Delta_{j}(F))(k_{1}\cdot\omega_{l}),\quad\text{for all }l\in\mathbb{N}.
\end{equation}
Plugging in the values of the integrals from (\ref{poisson1}) and (\ref{poisson2}) into (\ref{gnomega1}), we get, for all $n\geq m+1$,
\begin{align}
T_{n}F(k_{1}\cdot\omega_{o})&=\frac{1}{n}~\sum\limits_{l=0}^{m}\# S(o,l)q^{-l/2}\left(~\sum\limits_{j=0}^{l}B(j,l,z)~\mathcal{P}_{-z}(\Delta_{j}(F))(k_{1}\cdot\omega_{l})~\right)\nonumber\\
&\hspace*{7.5em}+\frac{1}{n}~\sum\limits_{l=m+1}^{n}\# S(o,l)q^{-l/2}\left(~\sum\limits_{j=0}^{m}B(j,l,z)\mathcal{P}_{-z}(\Delta_{j}(F))(k_{1}\cdot\omega_{l})~\right).\label{gnomega2}
\end{align}
Now substituting the value of $\mathcal{P}_{-z}(\Delta_{j}(F))(k_{1}\cdot\omega_{l})$ from (\ref{poissondeltaj}) into (\ref{gnomega2}) and noting that $B(j,l,-z)=\overline{B(j,l,z)}$ (see (\ref{bjmz}) and (\ref{harishchandraproperties})), we obtain, for all $n\geq m+1$,
\begin{multline}\label{prefinal}
T_{n}F(\omega)=\frac{1}{n}~\sum\limits_{l=0}^{m}\# S(o,l)q^{-l}\left(~\sum\limits_{j=0}^{l}|B(j,l,z)|^{2}~\Delta_{j}(F)(\omega)~\right)\\
+\frac{1}{n}~\sum\limits_{l=m+1}^{n}\# S(o,l)q^{-l}\left(~\sum\limits_{j=0}^{m}|B(j,l,z)|^{2}~\Delta_{j}(F)(\omega)~\right).
\end{multline}
Implementing a change of summation in (\ref{prefinal}) and imitating all the calculations above for $n\leq m$, it follows that for all $F\in\mathcal{K}_{m}(\Omega)$ and for all $n\in\mathbb{N}$,
\begin{equation}\label{gnfinal}
T_{n}F(\omega)=\sum\limits_{j=0}^{\min\{m,n\}}\left(~\frac{1}{n}~\sum\limits_{l=j}^{n}\# S(o,l)q^{-l}~|B(j,l,z)|^{2}~\right)\Delta_{j}(F)(\omega).
\end{equation}
Moreover, a simple computation using the explicit expression of $B(j,l,z)$ from (\ref{bjmz}) and the properties of the c-function from (\ref{harishchandraproperties}) gives us the following:
\begin{multline}\label{finalcoefficients}
\frac{1}{n}~\sum\limits_{l=j}^{n}\# S(o,l)q^{-l}~|B(j,l,z)|^{2}=\\
\begin{cases}
\frac{1}{n}~\left[~1+\left(\frac{q+1}{q}\right)2n|\mathbf{c}(z)|^{2}+\left(\frac{1-q^{2izn}}{1-q^{2iz}}\right)\mathbf{c}(z)^{2}q^{2iz}+\left(\frac{1-q^{-2izn}}{1-q^{-2iz}}\right)\mathbf{c}(-z)^{2}q^{-2iz}~\right], & \text{ if }j=0,\\
 & \\
\frac{q+1}{q}~\frac{|\mathbf{c}(z)|^{2}}{n}~\left[~2\left(n-j+1\right)-\left(\frac{1-q^{2iz(n-j+1)}}{1-q^{2iz}}\right)q^{2iz}-\left(\frac{1-q^{-2iz(n-j+1)}}{1-q^{-2iz}}\right)q^{-2iz}~\right], & \text{ if }j>0.
\end{cases}
\end{multline}

Therefore expanding $F\in\mathcal{K}_{m}(\Omega)$ using (\ref{knomegaformula}) and then using (\ref{gnfinal}), (\ref{finalcoefficients}) and the fact that $\Delta_{0}(F)=\mathcal{E}_{0}(F)$, we obtain
\begin{equation}\label{finalequation}
\frac{q}{2(q+1)|\mathbf{c}(z)|^{2}}T_{n}F(\omega)-F(\omega)=a_{0,n}~\mathcal{E}_{0}(F)(\omega)+\sum\limits_{j=1}^{m}a_{j,n}~\Delta_{j}(F)(\omega),\quad\text{for all }n\geq m+1,
\end{equation}
where, for a fixed $n$, the complex numbers $a_{j,n}$ are given by
\begin{multline}\label{finalcoefficients2}
a_{j,n}=
\begin{cases}
\frac{q}{2n(q+1)|\mathbf{c}(z)|^{2}}~\left[~1+\left(\frac{1-q^{2izn}}{1-q^{2iz}}\right)\mathbf{c}(z)^{2}q^{2iz}+\left(\frac{1-q^{-2izn}}{1-q^{-2iz}}\right)\mathbf{c}(-z)^{2}q^{-2iz}~\right], & \text{if }j=0,\\
 & \\
\frac{1}{2n}~\left[~2\left(1-j\right)-\left(\frac{1-q^{2iz(n-j+1)}}{1-q^{2iz}}\right)q^{2iz}-\left(\frac{1-q^{-2iz(n-j+1)}}{1-q^{-2iz}}\right)q^{-2iz}~\right], & \text{if }j>0.
\end{cases}
\end{multline}
Now observe that if we set $b_{m,z}=2m+4|1-q^{2iz}|^{-1}$, then one can write
\begin{equation}\label{sumdeltaj}
\sum\limits_{j=1}^{m}a_{j,n}~\Delta_{j}(F)(\omega)=\frac{b_{m,z}}{2n}\left(~\sum\limits_{j=1}^{m}\frac{2n~a_{j,n}}{b_{m,z}}~\Delta_{j}(F)(\omega)~\right)=\frac{b_{m,z}}{2n}~T_{\mathbf{a^{\prime}_{n}}}(F)(\omega),
\end{equation}
where $T_{\mathbf{a^{\prime}_{n}}}(F)$ is as in Lemma \ref{differenceoperatorboundedness}, $\mathbf{a^{\prime}_{n}}=\{a^{\prime}_{j,n}\}_{j\in\mathbb{N}}$ and
$$a^{\prime}_{j,n}=\begin{cases}
\frac{2n~a_{j,n}}{b_{m,z}}, & \text{if }j\leq m,\\
0, & \text{if }j\geq m+1.
\end{cases}$$
Furthermore, from (\ref{finalcoefficients2}), It is easy to see that $|a^{\prime}_{j,n}|=(2n~|a_{j,n}|)/|b_{m,z}|\leq1$, for all $j\in\mathbb{N}$. Therefore, using Lemma \ref{differenceoperatorboundedness} in the right hand side of the expression (\ref{sumdeltaj}), for every $1<r<\infty$, we get a positive constant $C_{r}$ independent of $\mathbf{a^{\prime}_{n}}$ such that
\begin{equation}\label{convergence1}
\|\sum\limits_{j=1}^{m}a_{j,n}~\Delta_{j}(F)\|_{L^{r}(\Omega)}\leq \frac{b_{m,z}}{2n}~C_{r}\|F\|_{L^{r}(\Omega)},\quad\text{for all }F\in\mathcal{K}_{m}(\Omega),\text{ for all }n\geq m+1.
\end{equation}
Finally, taking $L^{r}(\Omega)$-norm on both sides of (\ref{finalequation}), using (\ref{convergence1}) and the trivial boundedness $\|\mathcal{E}_{0}(F)\|_{L^{r}(\Omega)}\leq \|F\|_{L^{r}(\Omega)}$, we have, for all $F\in\mathcal{K}_{m}(\Omega)$ and $1<r<\infty$,
\begin{equation}\label{assertiondenseclass}
\left\|~\frac{q}{2(q+1)|\mathbf{c}(z)|^{2}}T_{n}F-F~\right\|_{L^{r}(\Omega)}\leq \frac{C_{m,z,r}}{2n}~\|F\|_{L^{r}(\Omega)}\rightarrow 0,\quad\text{as }n\rightarrow\infty.
\end{equation}

We are now ready to prove our assertion for functions in $L^{r}(\Omega)$, where $1<r<\infty$. However, before proceeding further, we first observe from (\ref{finalcoefficients}) that for a fixed $n\in\mathbb{N}$ and for every $m\in\mathbb{Z}_{+}$, there exists a positive constant $C_{z}$ depending only on $z$ such that
$$\frac{1}{n}~\sum\limits_{l=j}^{n}\# S(o,l)q^{-l}~|B(j,l,z)|^{2}\leq C_{z},\quad\text{for all }0\leq j\leq\min\{m,n\}.$$
Implementing this fact in (\ref{gnfinal}) and then using Lemma \ref{differenceoperatorboundedness}, we get, for $1<r<\infty$,
\begin{equation}\label{uniformbound}
\|T_{n}F\|_{L^{r}(\Omega)}\leq C_{z,r}\|F\|_{L^{r}(\Omega)},\quad\text{for all }F\in\mathcal{K}(\Omega),\text{ for all }n\in\mathbb{N}.
\end{equation}
Consequently, using the density argument, it is possible to extend $T_{n}$ uniquely as a bounded linear operator on $L^{r}(\Omega)$ so that (\ref{uniformbound}) holds for all $F\in L^{r}(\Omega)$ and for all $n\in\mathbb{N}$. Now fix $\epsilon>0$. By density, for a given $F\in L^{r}(\Omega)$, there exists some $G\in \mathcal{K}_{m}(\Omega)$ such that
\begin{equation}\label{density1}
\|G-F\|_{L^{r}(\Omega)}<\epsilon^{\prime}<\min\left\{\frac{\epsilon}{3},\frac{2\epsilon(q+1)|\mathbf{c}(z)|^{2}}{3C_{z,r}~q}\right\}.
\end{equation}
By the result proved earlier (see (\ref{assertiondenseclass})), we have
\begin{equation}\label{density2}
\left\|~\frac{q}{2(q+1)|\mathbf{c}(z)|^{2}}T_{n}G-G~\right\|_{L^{r}(\Omega)}<\frac{\epsilon}{3},\quad\text{for all }n\geq n_{0}\geq m+1.
\end{equation}
Finally, using (\ref{uniformbound}), (\ref{density1}) and (\ref{density2}), we have in conclusion,
\begin{align*}
&\left\|~\frac{q}{2(q+1)|\mathbf{c}(z)|^{2}}T_{n}F-F~\right\|_{L^{r}(\Omega)}\\
&\leq \frac{q}{2(q+1)|\mathbf{c}(z)|^{2}}\|T_{n}F-T_{n}G\|_{L^{r}(\Omega)}+\left\|~\frac{q}{2(q+1)|\mathbf{c}(z)|^{2}}T_{n}G-G~\right\|_{L^{r}(\Omega)}+~\|G-F\|_{L^{r}(\Omega)}\\
&\leq\frac{C_{z,r}~q}{2(q+1)|\mathbf{c}(z)|^{2}}\|F-G\|_{L^{r}(\Omega)}+\left\|~\frac{q}{2(q+1)|\mathbf{c}(z)|^{2}}T_{n}G-G~\right\|_{L^{r}(\Omega)}+~\|G-F\|_{L^{r}(\Omega)}\\
&<\frac{\epsilon}{3}+\frac{\epsilon}{3}+\frac{\epsilon}{3}=\epsilon,\quad\text{for all }n\geq n_{0},
\end{align*}
which is our claim.
\end{proof}

\begin{Corollary}
Let $z\in\mathbb{R}\setminus(\tau/2)\mathbb{Z}$. The characterization result \cite[Theorem A]{KR} states that every $u\in L^{2,\infty}(\mathcal{X})$ satisfying $\mathcal{L}u=\gamma(z)u$ can be written as $u=\mathcal{P}_{z}F$ for a unique $F\in L^{2}(\Omega)$. Combining this fact with Proposition \ref{inversionpoisson}, we have the following: If $u\in L^{2,\infty}(\mathcal{X})$ satisfies $\mathcal{L}u=\gamma(z)u$, then there exists a unique $F\in L^{2}(\Omega)$ such that
$$\frac{q}{2(q+1)|\mathbf{c}(z)|^{2}}~\lim\limits_{n\rightarrow\infty}\left(\frac{1}{n}\sum\limits_{x\in B(o,n)}p^{1/2-iz}(x,\omega)u(x)\right)=F(\omega),\quad\text{in }L^{2}(\Omega)\text{-norm}.$$
\end{Corollary}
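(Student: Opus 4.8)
The plan is to deduce the statement directly from the two results it cites, with only a routine operator-identification in between; there is essentially no new analytic content.

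\textbf{Step 1 (produce $F$).} Given $u\in L^{2,\infty}(\mathcal{X})$ with $\mathcal{L}u=\gamma(z)u$ and $z\in\mathbb{R}\setminus(\tau/2)\mathbb{Z}$, I would invoke \cite[Theorem A]{KR} to obtain the unique $F\in L^{2}(\Omega)$ for which $u=\mathcal{P}_{z}F$ on all of $\mathcal{X}$. This is already the $F$ appearing in the conclusion; its uniqueness is inherited from \cite[Theorem A]{KR}, and is in any case forced a posteriori by the limit formula, which exhibits $F$ as a norm limit in the Hilbert space $L^{2}(\Omega)$.

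\textbf{Step 2 (apply the inversion).} Since $r=2$ lies in $(1,\infty)$, Proposition \ref{inversionpoisson} applies to $F$. The one point to check is that, for this $F$, the averaged sum in the statement,
$$\frac{1}{n}\sum_{x\in B(o,n)}p^{1/2-iz}(x,\omega)\,u(x)=\frac{1}{n}\sum_{x\in B(o,n)}p^{1/2-iz}(x,\omega)\,\mathcal{P}_{z}F(x),$$
is exactly the operator $T_{n}F$ of (\ref{partialsum2case})--(\ref{uniformbound}). For $F\in\mathcal{K}(\Omega)$ this is the definition; for general $F\in L^{2}(\Omega)$ one writes $\mathcal{P}_{z}F(x)$ as the integral of $p^{1/2+iz}(x,\cdot)F$ against $\nu$, interchanges the finite sum over $x\in B(o,n)$ with this integral, and recognizes the result as an integral operator on $L^{2}(\Omega)$ with bounded kernel (for each fixed $n$) that agrees with $T_{n}$ on the dense subspace $\mathcal{K}(\Omega)$; by the uniform bound (\ref{uniformbound}) it therefore equals the continuous extension $T_{n}$ on all of $L^{2}(\Omega)$. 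Granting this, the asserted $L^{2}(\Omega)$-norm convergence is precisely Proposition \ref{inversionpoisson} with $r=2$.

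\textbf{Main obstacle.} There is no real obstacle: the substantive work is already carried out in Proposition \ref{inversionpoisson} (the explicit evaluation of $T_{n}F$ on cylindrical functions, the uniform $L^{r}$-bounds via Lemma \ref{differenceoperatorboundedness}, and the density argument) and in \cite[Theorem A]{KR}. The only care needed is the bookkeeping in Step 2, namely confirming that $\mathcal{P}_{z}F$ for $F\in L^{2}(\Omega)$ is the same pointwise function that $T_{n}$ acts on, which is immediate since $p^{1/2\pm iz}(x,\cdot)$ is bounded for each fixed $x$ and the sums over $B(o,n)$ are finite.
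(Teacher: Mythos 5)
Your proposal is correct and matches the paper's intent exactly: the corollary is stated there as an immediate combination of \cite[Theorem A]{KR} (to produce the unique $F\in L^{2}(\Omega)$ with $u=\mathcal{P}_{z}F$) with Proposition \ref{inversionpoisson} applied at $r=2$, with no separate proof given. Your extra bookkeeping in Step 2, identifying the pointwise-defined averages with the density-extended operators $T_{n}$, is a reasonable and harmless elaboration of what the paper leaves implicit.
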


We are now ready to prove Theorem \ref{hardy2case}. However, before we start, we observe that for $z\in\mathbb{R}\setminus(\tau/2)\mathbb{Z}$, one cannot use the convolution technique as in Proposition \ref{hardyifpart} to get the $\mathcal{H}^{r}_{\ast}(\mathcal{X})$-estimate of $\mathcal{P}_{z}F$. This is mostly due to the presence of certain oscillatory factors in the expression of $\mathcal{P}_{z}F$. Therefore our main aim is to obtain a pointwise estimate of $\mathcal{P}_{z}F$ by following a similar strategy as in \cite[Theorem 4.1]{SP2}. On the other hand, in order to prove the characterization part of Theorem \ref{hardy2case}, we shall use Proposition \ref{inversionpoisson}.

\vspace*{0.1in}

\noindent\textit{Proof of Theorem \ref{hardy2case}.} Fix $z\in\mathbb{R}\setminus(\tau/2)\mathbb{Z}$ and assume $F\in\mathcal{K}(\Omega)$. Clearly, $u=\mathcal{P}_{z}F$ is an eigenfunction of $\mathcal{L}$ with eigenvalue $\gamma(z)$. It remains to prove that $u\in \mathcal{H}^{r}_{\ast}(\mathcal{X})$ and that there exists a positive constant $C_{r}$ independent of $z$ such that
\begin{equation}\label{2caseifpart}
\|\mathcal{P}_{z}F\|_{\mathcal{H}^{r}_{\ast}(\mathcal{X})}\leq C_{r}|\mathbf{c}(z)|\|F\|_{L^{r}(\Omega)},\quad\text{for all }F\in\mathcal{K}(\Omega),\text{ for all }1<r<\infty.
\end{equation}
For this purpose, we now estimate $\mathcal{P}_{z}F$ pointwise. Fix $x=k\cdot\omega_{n}\in\mathcal{X}$. Substituting the formula (\ref{poissonfull}) of $\omega\mapsto p(x,\omega)$ into (\ref{poissontransform}) and further using (\ref{measureofsectors}), we obtain
$$\mathcal{P}_{z}F(k\cdot\omega_{n})=q^{-(1/2+iz)n}\left(\mathcal{E}_{0}(F)(k\cdot\omega_{o})+\left(\frac{q-q^{-2iz}}{q+1}\right)\sum\limits_{j=1}^{n}q^{2izj}~\mathcal{E}_{j}(F)(k\cdot\omega_{o})\right).$$
Now plugging in the formula (\ref{knomegaformula}) of $\mathcal{E}_{j}(F)$ into the expression above, we get
$$\mathcal{P}_{z}F(k\cdot\omega_{n})=q^{-(1/2+iz)n}\left(\mathcal{E}_{0}(F)(k\cdot\omega_{o})+\left(\frac{q-q^{-2iz}}{q+1}\right)\sum\limits_{j=1}^{n}q^{2izj}\left(~\sum\limits_{l=0}^{j}\Delta_{l}(F)(k\cdot\omega_{o})~\right)\right),$$
which after a change of summation yields
\begin{multline}\label{changeofsummationstep}
\mathcal{P}_{z}F(k\cdot\omega_{n})=q^{-(1/2+iz)n}\left(\left(1+\frac{q^{1+2iz}-1}{(q+1)(1-q^{2iz})}\right)\mathcal{E}_{0}(F)(k\cdot\omega_{o})\right.\\
-\left.\left(\frac{q-q^{-2iz}}{(q+1)(1-q^{2iz})}\right)\left(~q^{2iz(n+1)}\mathcal{E}_{n}(F)(k\cdot\omega_{o})-\sum\limits_{l=1}^{n}q^{2izl}~\Delta_{l}(F)(k\cdot\omega_{o})~\right)\right).
\end{multline}
Simplifying (\ref{changeofsummationstep}) further using the expression (\ref{harishchandra}) and formulae (\ref{harishchandraproperties}) of the Harish-Chandra's c-function, we get
\begin{multline*}
\mathcal{P}_{z}F(k\cdot\omega_{n})=q^{-(1/2+iz)n}\left(\overline{\mathbf{c}(z)}~\mathcal{E}_{0}(F)(k\cdot\omega_{o})+\mathbf{c}(z)q^{2izn}~\mathcal{E}_{n}(F)(k\cdot\omega_{o})\right.\\
-\left.\mathbf{c}(z)q^{-2iz}\left(~\sum\limits_{l=1}^{n}q^{2izl}~\Delta_{l}(F)(k\cdot\omega_{o})~\right)\right).
\end{multline*}
Taking modulus on both sides of the expression above, for all $n\in\mathbb{N}$ and for all $k\in K$, we finally have
\begin{align}
|\mathcal{P}_{z}F(k\cdot\omega_{n})|&\leq q^{-n/2}|\mathbf{c}(z)|\Big(~|\mathcal{E}_{0}(F)(k\cdot\omega_{o})|+|\mathcal{E}_{n}(F)(k\cdot\omega_{o})|+\left|\mathcal{E}_{n}(T_{\mathbf{a}}(F))(k\cdot\omega_{o})\right|~\Big)\nonumber\\
&\leq q^{-n/2}|\mathbf{c}(z)|\Big(~|\mathcal{E}_{0}(F)(k\cdot\omega_{o})|+\mathcal{E}^{\ast}(F)(k\cdot\omega_{o})+\mathcal{E}^{\ast}(T_{\mathbf{a}}(F))(k\cdot\omega_{o})~\Big)\label{2hardyright},
\end{align}
where $\mathbf{a}=\{q^{2izj}\}_{j\in\mathbb{N}}$ and $\mathcal{E}^{\ast}(F)$, $T_{\mathbf{a}}(F)$ are as in Lemma \ref{martingalemaximalfunction} and \ref{differenceoperatorboundedness} respectively. Computing the $L^{r}(K)$-norm on both sides of (\ref{2hardyright}), using Lemma \ref{martingalemaximalfunction}, \ref{differenceoperatorboundedness} and the trivial boundedness $\|\mathcal{E}_{0}(F)\|_{L^{r}(\Omega)}\leq \|F\|_{L^{r}(\Omega)}$, the required assertion (\ref{2caseifpart}) follows.

Conversely, let $\mathcal{L}u=\gamma(z)u$ for some $z\in\mathbb{R}\setminus(\tau/2)\mathbb{Z}$. By Theorem \ref{MZ2}, there exists a martingale $\mathbf{F}=\{F_{n}\}_{n\in\mathbb{Z}_{+}}$ on the boundary $\Omega$ such that $u=\mathcal{P}_{z}\mathbf{F}$. Since $u\in \mathcal{H}^{r}_{\ast}(\mathcal{X})$ and $\varepsilon_{n}u=\mathcal{P}_{z}F_{n}$ (see \cite[Lemma 3.3]{MZ}), using Lemma \ref{epsilonnboundedness} we get
$$\|\mathcal{P}_{z}F_{n}\|_{\mathcal{H}^{r}_{\ast}(\mathcal{X})}\leq \|u\|_{\mathcal{H}^{r}_{\ast}(\mathcal{X})},\quad\text{for all }n\in\mathbb{Z}_{+}.$$
Therefore, in view of Proposition \ref{martingalelpfunction}, we only need to show that for every $1<r<\infty$, there exists a positive constant $C_{r}$ independent of $z$ such that
\begin{equation}\label{2caseonlyif}
C_{r}|\mathbf{c}(z)|\|F\|_{L^{r}(\Omega)}\leq\|\mathcal{P}_{z}F\|_{\mathcal{H}^{r}_{\ast}(\mathcal{X})},\quad\text{for all }F\in\mathcal{K}(\Omega).
\end{equation}

Fix $F\in\mathcal{K}(\Omega)$. Let $T_{n}F$ be defined by the formula (\ref{inversion2}). Using Proposition \ref{inversionpoisson}, it follows that
\begin{equation}\label{weakconvergence}
\lim\limits_{n\rightarrow\infty}\int\limits_{\Omega}T_{n}F(\omega)~G(\omega)~d\nu(\omega)=\frac{2(q+1)|\mathbf{c}(z)|^{2}}{q}\int\limits_{\Omega}F(\omega)~G(\omega)~d\nu(\omega),\quad\text{for all }G\in\mathcal{K}(\Omega).
\end{equation}
Plugging in the value of $T_{n}F(\omega)$ from (\ref{inversion2}) into the left hand integral of the expression above and simplifying the expression further using (\ref{formula}), we obtain, for every $n\in\mathbb{N}$,
$$\int\limits_{K}T_{n}F(\omega)~G(\omega)~d\nu(\omega)=\frac{1}{n}~\sum\limits_{l=0}^{n}\# S(o,l)\left(~\int\limits_{K}\mathcal{P}_{z}F(k\cdot\omega_{l})~\mathcal{P}_{-z}G(k\cdot\omega_{l})~dk~\right).$$
Now taking modulus on both sides of the expression above, using H\"older's inequality and the fact $\# S(o,l)\leq q^{l}$, it follows that
\begin{multline*}
\left|~\int\limits_{K}T_{n}F(\omega)G(\omega)d\nu(\omega)\right|\leq\frac{1}{n}~\sum\limits_{l=0}^{n}q^{l}\left(~\int\limits_{K}|\mathcal{P}_{z}F(k\cdot\omega_{l})|^{r}dk\right)^{\frac{1}{r}}\left(~\int\limits_{K}|\mathcal{P}_{-z}G(k\cdot\omega_{l})|^{r^{\prime}}dk\right)^{\frac{1}{r^{\prime}}}.
\end{multline*}
Since $G\in L^{r^{\prime}}(\Omega)$ for $1<r^{\prime}<\infty$, using (\ref{2caseifpart}) and the fact that $|\mathbf{c}(z)|=|\mathbf{c}(-z)|$ (see (\ref{harishchandraproperties})), we get
\begin{align*}
\left|~\int\limits_{K}T_{n}F(\omega)~G(\omega)~d\nu(\omega)~\right|&\leq\frac{C_{r}|\mathbf{c}(z)|\|G\|_{L^{r^{\prime}}(\Omega)}}{n}\sum\limits_{l=0}^{n}q^{\frac{l}{2}}\left(~\int\limits_{K}|\mathcal{P}_{z}F(k\cdot\omega_{l})|^{r}dk\right)^{\frac{1}{r}}\\
&\leq C_{r}|\mathbf{c}(z)|\|G\|_{L^{r^{\prime}}(\Omega)}\|\mathcal{P}_{z}F\|_{\mathcal{H}^{r}_{\ast}(\mathcal{X})}.
\end{align*}
Combining the inequality above with (\ref{weakconvergence}) and then applying standard duality argument, we get (\ref{2caseonlyif}) which was required to be proved. \qed

\section*{Acknowledgement}
The author wishes to thank Rudra P. Sarkar for carefully reading the manuscript and suggesting several important changes which improved an earlier draft. The author also gratefully acknowledges the support provided by the National Board of Higher Mathematics (NBHM) post-doctoral fellowship (Number: 0204/3/2021/R$\&$D-II/7386) from the Department of Atomic Energy (DAE), Government of India.

\end{document}